\DeclareMathOperator{\rank}{rank}
\DeclareMathOperator{\dif}{d}
\newcommand{\Cal}{\mathcal{C}}
\renewcommand{\H}{\mathscr{H}}
\newcommand{\V}{\mathscr{V}}
\newcommand{\Fa}{\mathcal{F}}
\newcommand{\ol}{\mathcal{O}}
\def \a{\alpha}
\def \b{\beta}
\def \e{\eta}
\def \G{\Gamma}
\def \O{\Omega}
\def \phi{\varphi}
\def \Phi{\varPhi}
\def \p{\pi}
\def \r{\rho}
\def \s{\sigma}
\def \R{\mathbb{R}}
\def \Hq{\mathbb{H}\,}
\def \C{\mathbb{C}\,}
\def\widecheckg{g^{\hspace*{-2.5pt}\vbox to 5pt{\hbox to
0pt{\LARGE$\check{}$}}}\hspace*{2pt}}
\def\widecheckl{\lambda^{\hspace*{-3.5pt}\vbox to 8pt{\hbox to
0pt{\LARGE$\check{}$}}}\hspace*{2pt}}
\begin{document}

\title{On the integrability of the\\ 
co-CR quaternionic structures} 
\author{Radu Pantilie}
\email{\href{mailto:radu.pantilie@imar.ro}{radu.pantilie@imar.ro}}
\address{R.~Pantilie, Institutul de Matematic\u a ``Simion~Stoilow'' al Academiei Rom\^ane,
C.P. 1-764, 014700, Bucure\c sti, Rom\^ania}
\subjclass[2010]{Primary 53C28, Secondary 53C26}

\newtheorem{thm}{Theorem}[section]
\newtheorem{lem}[thm]{Lemma}
\newtheorem{cor}[thm]{Corollary}
\newtheorem{prop}[thm]{Proposition}

\theoremstyle{definition}

\newtheorem{defn}[thm]{Definition}
\newtheorem{rem}[thm]{Remark}
\newtheorem{exm}[thm]{Example}

\numberwithin{equation}{section}
 
\begin{abstract} 
We characterise the integrability of any co-CR quaternionic structure in terms of the curvature 
and a generalized torsion of the connection. Also, we apply this result to obtain, for example, the following:\\ 
\indent
$\bullet$ New co-CR quaternionic structures built on vector bundles over a quaternionic 
manifold $M$, whose twistor spaces are holomorphic vector bundles over the twistor space $Z$ of $M$. 
Moreover, all the holomorphic vector bundles over $Z$, 
which are positive and isotypic when restricted to the twistor lines, are obtained this way.\\ 
\indent 
$\bullet$ Under generic dimensional conditions, any manifold endowed with an almost \mbox{$f$-quaternionic} structure 
and a compatible torsion free connection is, locally, a product of a hypercomplex manifold 
with $({\rm Im}\Hq\!)^k$, for some $k\in\mathbb{N}$\,.
\end{abstract} 

\maketitle 
\thispagestyle{empty} 
\vspace{-8mm}

\section*{Introduction} 

\indent 
It is a basic fact that any notion has an adequate level of generality. For example, anyone with some interest in 
three-dimensional Einstein--Weyl spaces and quaternionic manifolds, should have felt a need for a more general 
geometric notion, with a twistor space endowed with a locally complete family of spheres with positive normal bundle. 
In \cite{fq_2} and \cite{Pan-twistor_(co-)cr_q} (see, also, \cite{fq}\,) it is shown that the \emph{co-CR quaternionic manifolds} 
fulfill the needs for such a notion. Up to now, we know the following manifolds which are endowed with natural 
co-CR quaternionic structures:\\ 
\indent 
\quad(a) the three-dimensional Einstein--Weyl spaces.\\ 
\indent 
\quad(b) the quaternionic manifolds (in particular, the anti-self-dual manifolds).\\ 
\indent 
\quad(c) the local orbit spaces of any nowhere zero quaternionic vector field on a quaternionic manifold.\\ 
\indent 
\quad(d) a principal bundle built over any quaternionic manifold (in particular, $S^{4n+3}$); 
the corresponding twistor space is the product of the sphere with the twistor space of the quaternionic manifold 
($\C\!P^1\times\C\!P^{2n+1}$ for $S^{4n+3}$).\\ 
\indent 
\quad(e) vector bundles over any quaternionic manifold; the twistor spaces are 
holomorphic vector bundles over the twistor space of the quaternionic manifold.\\ 
\indent 
\quad(f) the Grassmannian of oriented three-dimensional vector subspaces of the Euclidean space of dimension $n+1$\,; 
the twistor space is the nondegenerate hyperquadric in the $n$-dimensional complex projective space, $n\geq3$\,.\\ 
\indent 
\quad(g) the complex manifold $M_V$ formed of the isotropic two-dimensional vector subspaces of any complex symplectic vector space $V$; the twistor space is $M_V$ itself.\\ 
\indent 
\quad(h) the space of holomorphic sections of $P\bigl(\ol\oplus\ol(n)\bigr)$ induced by the holomorphic sections of $\ol(m)\oplus\ol(m+n)$ which 
intertwine the antipodal map and the conjugation; the twistor space is $P\bigl(\ol\oplus\ol(n)\bigr)$\,, where $m,n\in\mathbb{N}$ are even, $n\neq0$\,.\\ 
\indent 
\quad(i) the space of holomorphic maps of a fixed odd degree from $\C\!P^1$ to $\C\!P^1$ which commute with the antipodal map; 
the twistor space is $\C\!P^1\times\C\!P^1$.\\ 
(The details for (d), (f), (g) can be found in \cite{fq_2}\,, for (c), (h), (i) in \cite{Pan-twistor_(co-)cr_q}\,, 
whilst the details for (e) will be given in Section \ref{section:co-cr_q_integrab}\,, below.)\\ 
\indent 
In this paper, we settle the problem of finding a useful characterisation for the integrability of the co-CR quaternionic structures. 
For this, we use a seemingly new generalized torsion associated to any connection on a vector bundle $E$, over a manifold $M$, 
endowed with a morphism $\r:E\to TM$ (if $\r$ is an isomorphism this reduces to the classical torsion 
of a connection on a manifold). This is studied in Section \ref{section:gen_torsion}\,, where we show that it provides a necessary 
tool to handle the integrability of distributions defined on Grassmannian bundles.\\ 
\indent 
In Section \ref{section:co-cr_q_integrab}\,, we give the main integrability result (Theorem \ref{thm:co-cr_q_integrab}\,), 
and its first applications. For example, there we prove (Theorem \ref{thm:holo_bundles_over_Z}\,) that the following 
holds for any holomorphic vector bundle $\mathcal{Z}$\,, over the twistor space $Z$ of a quaternionic manifold $M$, 
endowed with a conjugation covering the conjugation of $Z$\,: \emph{if the Birkhoff--Grothendieck decomposition 
of $\mathcal{Z}$ restricted to each twistor line contains only terms of Chern number $m\geq1$ then $\mathcal{Z}$ is the twistor space 
of a co-CR quaternionic manifold, built on the total space of a vector bundle over $M$}. This gives example (e)\,, above, 
and, in particular, for $m=1$ it reduces to \cite[Theorem 7.2]{Sal-dg_qm}\,.\\ 
\indent 
An important particular type of co-CR quaternionic manifolds is provided by the $f$-quaternionic manifolds. 
For example, all of the (a), (b), (d), (f), and (g), above, give such manifolds \cite{fq_2}\,.  
Also, the same applies to example (e) (Theorem \ref{thm:holo_bundles_over_Z}\,) if $m=2$\,.\\ 
\indent  
In Section \ref{section:f-q_integrab}\,, we apply Theorem \ref{thm:co-cr_q_integrab}\,, to study the integrability of 
$f$-quaternionic structures. This leads to Theorem \ref{thm:f-q_generic_dims} by which, under generic dimensional conditions, 
any manifold endowed with an almost $f$-quaternionic structure 
and a compatible torsion free connection is, locally, a product of a hypercomplex manifold 
with $({\rm Im}\Hq\!)^k$, for some $k\in\mathbb{N}$\,.

\section{A generalized torsion} \label{section:gen_torsion} 

\indent 
We work in the smooth and the complex-analytic categories (in the latter case, by the tangent bundle we mean 
the holomorphic tangent bundle). For simplicity, sometimes, the bundle projections will be denoted in the same way, when 
the base manifold is the same.\\ 
\indent 
Let $E$ be a vector bundle, endowed with a connection $\nabla$, over a manifold $M$. Suppose that we are given a 
morphism of vector bundles $\r:E\to TM$.\\ 
\indent 
Then, firstly, note that there exists a unique section $T$ of $TM\otimes\Lambda^2E^*$ such that 
\begin{equation} \label{e:gen_torsion} 
T(s_1,s_2)=\r\circ\bigl(\nabla_{\r\circ s_1}s_2-\nabla_{\r\circ s_2}s_1\bigr)-[\r\circ s_1,\r\circ s_2]\;, 
\end{equation} 
for any (local) sections $s_1$ and $s_2$ of $E$\,; we call $T$ the \emph{torsion (with respect to $\r$)} of $\nabla$.\\ 
\indent 
Let $F$ and $G$ be the typical fibre and structural group of $E$\,, respectively, and assume $\nabla$ compatible with $G$\,. 
Denote by $(P,M,G)$ the frame bundle of $E$ and let $\H\subseteq TP$ be the principal connection on $P$ corresponding to $\nabla$.\\ 
\indent 
On composing the projection 
$P\times F\to E$ with $\r$\,, we obtain a morphism of vector bundles from $P\times F$ to $TM$ which covers 
the projection $\p:P\to M$\,. 
Consequently, this morphism factorises as a morphism of vector bundles, over $P$, from $P\times F$ to $\H$ followed by the 
canonical morphism from $\H\bigl(=\p^*(TM)\bigr)$ onto $TM$. Thus, if $\xi\in F$ 
the corresponding (constant) section of $P\times F$ determines a horizontal vector field $B(\xi)$ on $P$.\\ 
\indent  
Note that, $B(\xi)$ is characterised by $\dif\!\p\bigl(B(\xi)_u\bigr)=\r(u\xi)$\,, for any $u\in P$, 
and the fact that it is horizontal (compare \cite[p.\ 119]{KoNo}\,). However, unlike the classical case 
$B(\xi)$ may have zeros; indeed $B(\xi)$ is zero at $u\in P$ if and only if $\r(u\xi)=0$\,. 
Also, $\H$ is generated as a vector bundle by all $B(\xi)$\,, $\xi\in F$, if and only if $\r$ is surjective.\\ 
\indent  
Furthermore, $B:F\to\G(TP)$ is $G$-equivariant. Indeed, if we denote by $R_a$ (the differential of) the right translation by some $a\in G$ 
on $P$, we have 
$$\dif\!\p\bigl(\bigl(R_a\bigl(B(\xi)\bigr)\bigr)_u\bigr)=\dif\!\p\bigl(B(\xi)_{ua^{-1}}\bigr)=\r\bigl(u(a^{-1}\xi)\bigr)\;.$$
Hence, $R_a\bigl(B(\xi)\bigr)=B(a^{-1}\xi)$\,, for any $a\in G$ and $\xi\in F$; in particular, $[A,B(\xi)]=B(A\xi)$ for any $A\in\mathfrak{g}$ 
and $\xi\in F$, where $\mathfrak{g}$ is the Lie algebra of $G$ and we denote in the same way its elements and the corresponding 
fundamental vector fields on $P$ (compare \cite[Proposition III.2.3]{KoNo}\,). 

\begin{rem}  
Let $\xi\in F$ and let $(u(t))_t$ be an integral curve of $B(\xi)$\,; denote $c=\p\circ u$\,, $s=u\xi$\,. Then $c$ is a curve 
in $M$, and $s$ is a section of $c^*E$ satisfying $\r\circ s=\dot{c}$ and $(c^*\nabla)(s)=0$\,.  
These curves $s$ lead to a natural generalization of the notion of geodesic of a connection on a manifold. Note that, for any 
$e\in E$ there exists a unique germ of such a curve $s$ with $s(0)=e$\,. 
\end{rem} 

\indent 
In this setting, Cartan's first structural equation is replaced by the following fact. 

\begin{prop} \label{prop:gen_torsion} 
For any $u\in P$ and $\xi,\e\in F$ we have 
\begin{equation} \label{e:gen_torsion_P} 
T(u\xi,u\e)=-\dif\!\p\bigl([B(\xi),B(\e)]_u\bigr)\;. 
\end{equation} 
\end{prop}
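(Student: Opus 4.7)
The plan is to reduce the identity to a pointwise computation at $u_0$ by working in a local frame adapted to $\nabla$ at $m_0:=\p(u_0)$. Concretely, I would pick a local section $\sigma$ of $P$ around $m_0$ with $\sigma(m_0)=u_0$ and $(\sigma^{*}\omega)_{m_0}=0$, where $\omega$ is the principal connection form associated to $\H$. The existence of such $\sigma$ is standard: starting from any local section $\sigma'$ with $\sigma'(m_0)=u_0$, the modification $\sigma=\sigma'\cdot g$ with $g:U\to G$ satisfying $g(m_0)=e$ and $\dif\!g_{m_0}=-((\sigma')^{*}\omega)_{m_0}$ has the desired property, via the transformation rule $(\sigma' g)^{*}\omega=\mathrm{Ad}(g^{-1})\,(\sigma')^{*}\omega+g^{-1}\dif\!g$.

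Setting $s_1:=\sigma\cdot\xi$ and $s_2:=\sigma\cdot\e$ then yields local sections of $E$ with $\nabla s_1|_{m_0}=\nabla s_2|_{m_0}=0$. Writing $X_j=\r\circ s_j$, evaluation of \eqref{e:gen_torsion} at $m_0$ gives
$$T(u_0\xi,u_0\e)=-[X_1,X_2]_{m_0}\,,$$
so it suffices to verify $\dif\!\p\bigl([B(\xi),B(\e)]_{u_0}\bigr)=[X_1,X_2]_{m_0}$. For this, I would pass to the trivialization $P|_U\cong U\times G$ determined by $\sigma$, in which $u_0$ corresponds to $(m_0,e)$. Using that the horizontal lift of $Y\in T_m U$ at $(m,a)$ has the form $\bigl(Y,\,-R_a(\sigma^{*}\omega)_m(Y)\bigr)\in T_m U\oplus T_a G$, one obtains $B(\xi)_{(m,a)}=\bigl(\r(\sigma(m)(a\xi)),\,V_1(m,a)\bigr)$ with vertical part $V_1$ proportional to $\sigma^{*}\omega$; in particular $V_1(m_0,e)=0$, and analogously $V_2(m_0,e)=0$ for $B(\e)$.

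Expanding the Lie bracket $[B(\xi),B(\e)]$ at $(m_0,e)$ in local coordinates on $U\times G$, the $T_m U$-component decomposes into a ``base--base'' contribution and several ``fibre--base'' cross terms. The cross terms each carry a factor of $V_i(m_0,e)=0$ and hence vanish, while the base--base contribution equals $[X_1,X_2]_{m_0}$, since along $a=e$ the base components of $B(\xi)$ and $B(\e)$ coincide with $X_1$ and $X_2$. Projecting by $\dif\!\p$ retains exactly this $T_m U$-component and delivers the required identity. The main obstacle is organising the local-coordinate bracket calculation carefully enough to recognise that the normal-frame hypothesis $(\sigma^{*}\omega)_{m_0}=0$ annihilates every contribution except the Lie bracket in the base; once this is verified, the proposition follows by combining the two equalities above.
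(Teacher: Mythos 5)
Your proof is correct and follows essentially the same route as the paper: choose a local section through $u_0$ whose pulled-back connection form vanishes at the base point, pass to the induced trivialization $P|_U\cong U\times G$, and observe that both sides of \eqref{e:gen_torsion_P} reduce to $-[\r(u\xi),\r(u\e)]$ at that point. You merely spell out the steps the paper compresses into ``we quickly obtain'' (existence of the normal gauge, the explicit horizontal-lift formula, and the vanishing of the cross terms in the bracket), all of which check out.
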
 
\begin{proof} 
Let $u_0\in P$ and let $u$ be a local section of $P$, defined on some open neighbourhood $U$ of $x_0=\p(u_0)$\,, such that 
$u_{x_0}=u_0$ and the local connection form $\G$ of $\H$, with respect to $u$\,, is zero at $x_0$\,.\\ 
\indent  
If $\xi\in F$ then, under the isomorphism $P|_U=U\times G$ corresponding to $u$\,, we have $B(\xi)_{ua}=\bigl(\r(ua\xi),-\G\bigl(\r(ua\xi)\bigr)a\bigr)$\,, 
for any $a\in G$\,.\\  
\indent 
By using the fact that $\G_{x_0}=0$\,, we quickly obtain that, at $u_0$\,, both sides of \eqref{e:gen_torsion_P} are equal to 
$-[\r(u\xi),\r(u\e)]_{x_0}$\,, for any $\xi,\e\in F$.  
\end{proof} 

\indent 
Also, we obtain the following natural generalization of the first Bianchi identity. 

\begin{prop} \label{prop:Bianchi_1} 
Let $E$ be a vector bundle, over $M$, and suppose that there exists a morphism of vector bundles $\r:E\to TM$.\\ 
\indent 
Then the curvature form $R$ of any torsion free connection on $E$ satisfies  
\begin{equation} \label{e:Bianchi_1} 
\r\bigg(R\bigl(\r(e_1),\r(e_2)\bigr)e_3+R\bigl(\r(e_2),\r(e_3)\bigr)e_1+ R\bigl(\r(e_3),\r(e_1)\bigr)e_2\bigg)=0\;, 
\end{equation} 
for any $e_1\,,e_2\,,e_3\in E$\,. 
\end{prop}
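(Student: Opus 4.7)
The plan is to extend $e_1,e_2,e_3$ to local sections $s_1,s_2,s_3$ of $E$ near a common point—legitimate because the identity \eqref{e:Bianchi_1} is pointwise-tensorial in $e_1,e_2,e_3$—set $X_i:=\r\circ s_i$, and compute the cyclic sum directly, invoking the torsion-free hypothesis twice and closing with the Jacobi identity on $M$.

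Concretely, the key steps are: (i) expand $R(X_i,X_j)s_k=\nabla_{X_i}\nabla_{X_j}s_k-\nabla_{X_j}\nabla_{X_i}s_k-\nabla_{[X_i,X_j]}s_k$; (ii) use torsion-freeness in the form $[X_i,X_j]=\r(\nabla_{X_i}s_j-\nabla_{X_j}s_i)$ to rewrite the third term; (iii) apply torsion-freeness a second time, now to the pair of local sections $(\nabla_{X_i}s_j,\,s_k)$ of $E$, to obtain
$$\r\nabla_{\r\nabla_{X_i}s_j}s_k=\r\nabla_{X_k}\nabla_{X_i}s_j+[\r\nabla_{X_i}s_j,X_k];$$
(iv) substitute this into $\sum_{\rm cyc}\r R(X_i,X_j)s_k$ and observe that the six resulting $\r\nabla\nabla$-terms cancel pairwise, so that only brackets on $M$ survive. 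Regrouping and using $\r\nabla_{X_i}s_j-\r\nabla_{X_j}s_i=[X_i,X_j]$ once more, the remainder assembles into $-\sum_{\rm cyc}[[X_i,X_j],X_k]$, which vanishes by Jacobi on $M$.

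A more conceptual variant goes via the frame bundle of $E$: Proposition \ref{prop:gen_torsion} forces $[B(\xi),B(\e)]$ to be vertical, hence equal to $-\Omega(B(\xi),B(\e))^*$ for $\Omega$ the curvature form of $\H$, and projecting the Jacobi identity for $B(\xi_1),B(\xi_2),B(\xi_3)$ on $P$ by $\dif\!\p$ to $M$, combined with the standard correspondence between $R$ and $\Omega$, recovers \eqref{e:Bianchi_1} directly. The main obstacle in either approach is the careful bookkeeping of signs and of the pairing of the twelve $\r\nabla\nabla$-terms with the six Jacobi ingredients; no deeper analytic or global difficulty intervenes.
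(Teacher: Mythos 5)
Your proposal is correct, and your primary argument takes a genuinely different route from the paper's. The paper proves \eqref{e:Bianchi_1} upstairs on the frame bundle $P$ of $E$: torsion-freeness, through Proposition \ref{prop:gen_torsion}, forces the brackets $[B(\xi),B(\e)]$ to be vertical, the horizontal part of $\bigl[B(\mu),[B(\xi),B(\e)]\bigr]_u$ is then computed to be $B\bigl(\O_{u\!}\bigl(B(\xi),B(\e)\bigr)\mu\bigr)_u$, and \eqref{e:Bianchi_1} is read off as the horizontal component of the Jacobi identity for $B(\xi)$, $B(\e)$, $B(\mu)$ --- so your second, sketched variant is, up to sign conventions for $\O$, precisely the paper's proof. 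Your first argument instead stays entirely on the base, and it goes through: the left-hand side of \eqref{e:Bianchi_1} is tensorial in $e_1,e_2,e_3$, so extending to sections is harmless; the twelve $\r\nabla\nabla$-terms (not six --- four per cyclic summand; a cosmetic slip, as your own later count of twelve shows) cancel in six pairs; and the bracket terms regroup via $[X_i,X_j]=\r\nabla_{X_i}s_j-\r\nabla_{X_j}s_i$ into $-\sum_{\rm cyc}\bigl[[X_i,X_j],X_k\bigr]=0$. The one step deserving an explicit word of justification is your (iii): you evaluate the torsion on the pair $(\nabla_{X_i}s_j,\,s_k)$, which is legitimate exactly because $T$ is a section of $TM\otimes\Lambda^2E^*$, hence tensorial in its $E$-arguments --- in this generalized setting that is what replaces the classical habit of evaluating torsion only on vector fields. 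Conceptually, your computation amounts to observing that torsion-freeness makes $\r$ a bracket homomorphism from $[s_1,s_2]=\nabla_{\r\circ s_1}s_2-\nabla_{\r\circ s_2}s_1$ (the bracket of Proposition \ref{prop:Lie_algebroid}) to the Lie bracket of vector fields, so that $\r$ applied to its Jacobiator --- which is the cyclic curvature sum --- equals the Jacobiator of the $X_i$, which vanishes; this also explains why only the $\r$-ed identity holds, the cyclic sum itself being in general nonzero in $E$. What each approach buys: yours is elementary, self-contained, and works verbatim in both the smooth and complex-analytic categories; the paper's is shorter once the $B(\xi)$-machinery is in place, runs parallel to the classical derivation in Kobayashi--Nomizu, and reuses exactly the structures needed immediately afterwards for Proposition \ref{prop:rho_integrab}.
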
  
\begin{proof} 
Let $\O$ be the curvature form of the corresponding principal connection on the frame bundle $P$ of $E$ 
(we think of $\O$ as a two-form on $P$ with values in the Lie algebra of the structural group of $E$\,; see \cite{KoNo}\,). 
Equation \eqref{e:Bianchi_1} is equivalent to the following 
\begin{equation} \label{e:Bianchi_1_princ} 
B\bigg(\O_{u\!}\bigl(B(\xi),B(\e)\bigr)\mu+\O_{u\!}\bigl(B(\e),B(\mu)\bigr)\xi+\O_{u\!}\bigl(B(\mu),B(\xi)\bigr)\e\bigg)_u=0\;, 
\end{equation}  
for any $u\in P$ and $\xi$\,, $\e$\,, $\mu$ in the typical fibre $F$ of $E$\,.\\ 
\indent 
By using the fact that the connection is torsion free, we obtain that, for any $u\in P$ and $\xi,\e,\mu\in F$, 
the horizontal part of $\bigl[B(\mu),[B(\xi),B(\e)]\bigr]_u$ is $B\bigl(\O_{u\!}\bigl(B(\xi),B(\e)\bigr)\mu\bigr)_u$\,. 
Therefore \eqref{e:Bianchi_1_princ} is just the horizontal part, at $u$\,, of the Jacobi identity, for the usual bracket,  
applied to $B(\xi)$\,, $B(\e)$\,, $B(\mu)$\,. 
\end{proof} 

\indent 
Let $S$ be a submanifold of a Grassmannian of $F$ on which 
$G$ acts transitively. Then $Z=P\times_GS$ is a subbundle of a Grassmannian bundle of $E$ on which $\nabla$ 
induces a connection $\H\subseteq TZ$\,.\\ 
\indent
Suppose that for any $p\in Z$ the restriction of $\r$ to $p$ is an isomorphism 
onto some vector subspace of $T_{\p(p)}M$, where $\p:Z\to M$ is the projection. 
Then we can construct a distribution $\Cal$ on $Z$ by requiring $\Cal\subseteq\H$ and $\dif\!\p(\Cal_p)=\r(p)$\,, for any $p\in Z$\,.  

\begin{prop} \label{prop:rho_integrab} 
The following assertions are equivalent, where $R$ and $T$ are the curvature form and the torsion of $\nabla$, respectively:\\ 
\indent 
\quad{\rm (i)} $\Cal$ is integrable;\\ 
\indent 
\quad{\rm (ii)} $R\bigl(\Lambda^{2\!}\bigl(\r(p)\bigr)\bigr)(p)\subseteq p$ and 
$T\bigl(\Lambda^2p\bigr)\subseteq\r(p)$\,, for any $p\in Z$\,.\\   
\indent 
Consequently, if\/ $\nabla$ is torsion free and $\Cal$ is integrable then  
\begin{equation} \label{e:first_Bianchi} 
R\bigl(\r(e_1),\r(e_2)\bigr)e_3+R\bigl(\r(e_2),\r(e_3)\bigr)e_1+ R\bigl(\r(e_3),\r(e_1)\bigr)e_2=0\;, 
\end{equation} 
for any $p\in Z$ and $e_1,e_2,e_3\in p$\,.  
\end{prop}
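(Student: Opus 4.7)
The plan is to lift the problem to the principal \mbox{$G$-bundle} $\p_Z\colon P\times S\to Z=P\times_G S$ (with the diagonal action $a\cdot(u,s)=(ua^{-1},as)$) and then exploit Proposition~\ref{prop:gen_torsion}. I would introduce the distribution $\widetilde{\Cal}\subseteq T(P\times S)$ defined pointwise by $\widetilde{\Cal}_{(u,s)}=\{(B(\xi)_u,0):\xi\in s\}$. Because $\r|_p$ is injective, $\dif\!\p_Z$ sends $\widetilde{\Cal}_{(u,s)}$ isomorphically onto $\Cal_{[u,s]}$; a brief dimension count shows further that $\widetilde{\Cal}$ is $G$-equivariant, meets the vertical distribution $\V$ of $\p_Z$ only in the zero section, and satisfies $\widetilde{\Cal}\oplus\V=(\dif\!\p_Z)^{-1}(\Cal)$. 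A standard fact about submersions then yields that $\Cal$ is integrable if and only if $[\widetilde{\Cal},\widetilde{\Cal}]\subseteq\widetilde{\Cal}+\V$.

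For the bracket computation I would fix $(u_0,s_0)\in P\times S$, write $p=[u_0,s_0]$, and for each $\xi\in s_0$ choose a smooth extension $s\mapsto\xi(s)\in s$ with $\xi(s_0)=\xi$ (a local section of the tautological bundle over $S$). Setting $\widetilde{X}_\xi(u,s)=(B(\xi(s))_u,0)$, the vanishing of the $S$-component and the fact that the $P$-component depends on $s$ only as a parameter give $[\widetilde{X}_\xi,\widetilde{X}_\e]_{(u_0,s_0)}=([B(\xi),B(\e)]_{u_0},0)$ for any $\xi,\e\in s_0$. I would then split $[B(\xi),B(\e)]_{u_0}$ into horizontal and vertical parts with respect to the principal connection: by Proposition~\ref{prop:gen_torsion} the horizontal part is the horizontal lift of $-T(u_0\xi,u_0\e)$, whilst the vertical part is $-\O_{u_0\!}(B(\xi),B(\e))^*$, where $\O$ denotes the principal curvature two-form and the star the associated fundamental vector field.

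Matching this against $(\widetilde{\Cal}+\V)_{(u_0,s_0)}$: a vector $(W,0)$ lies there precisely when the horizontal part of $W$ equals $B(\zeta)_{u_0}$ for some $\zeta\in s_0$ and the vertical part of $W$ equals $A^*_{u_0}$ for some $A$ in the stabilizer Lie algebra $\mathfrak{g}_{s_0}$; indeed the vanishing of the $S$-component is exactly what forces $A\in\mathfrak{g}_{s_0}$. Using the injectivity of $\r|_p$ and the frame $u_0$ to identify $\O$ with $R$ and $\mathfrak{g}_{s_0}$ with the endomorphisms of $E_{\p(u_0)}$ preserving $p$, these two requirements translate, respectively, into $T(\Lambda^2 p)\subseteq\r(p)$ and $R\bigl(\Lambda^2\r(p)\bigr)(p)\subseteq p$; varying $\xi,\e\in s_0$ and $s_0\in S$ gives the equivalence of (i) and (ii). For the final assertion, torsion freeness makes the first condition automatic, while the second places the cyclic sum in \eqref{e:first_Bianchi} inside $p$; Proposition~\ref{prop:Bianchi_1} simultaneously places it in $\ker\r$, and the injectivity of $\r|_p$ then forces it to vanish. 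The delicate point I anticipate is the handling of the vertical component, namely the identification ``$A$ stabilizes $s_0$'' $\Leftrightarrow$ ``$R$ preserves $p$''; everything else should reduce to direct bookkeeping via Proposition~\ref{prop:gen_torsion}.
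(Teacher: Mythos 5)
Your proposal is correct and takes essentially the same route as the paper: the paper lifts $\Cal$ to $\Cal_P=(\dif\!\s)^{-1}(\Cal)$ on $P$ itself (writing $Z=P/H$, with $H$ the isotropy group of a fixed $p_0\in S$, so that $\Cal_P$ is spanned by the $B(\xi)$\,, $\xi\in p_0$\,, and the $\mathfrak{h}$-fundamental fields), while your lift to $P\times S\to P\times_G S$ is an equivalent bookkeeping device, and both arguments then reduce to the same bracket $[B(\xi),B(\e)]$\,, split into horizontal and vertical parts via Proposition~\ref{prop:gen_torsion} and Cartan's second structural equation, with the vertical condition decoded through the stabilizer algebra exactly as you describe. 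Your closing derivation of \eqref{e:first_Bianchi} from assertion (ii), Proposition~\ref{prop:Bianchi_1}, and the injectivity of $\r|_p$ spells out correctly what the paper leaves implicit.
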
 
\begin{proof} 
Let $(P,M,G)$ be the frame bundle of $E$\,, and let $H$ be the isotropy subgroup of $G$ at some $p_0\in S$\,. 
Then $Z=P/H$, and let $\s:P\to Z$ be the projection.\\ 
\indent 
If we denote $\Cal_P=(\dif\!\s)^{-1}(\Cal)$ then, as $\s$ is a surjective submersion, we also have $\Cal=(\dif\!\s)(\Cal_P)$\,. 
Therefore $\Cal$ is integrable if and only if $\Cal_P$ is integrable.\\ 
\indent 
Now, note that $\Cal_P$ is generated by all $B(\xi)$\,, with $\xi\in p_0$ and all (the fundamental vector fields) $A\in\mathfrak{h}$\,, 
where $\mathfrak{h}$ is the Lie algebra of $H$. Hence, $\Cal_P$ is integrable if and only if $[B(\xi),B(\e)]$ is a section of $\Cal_P$\,, 
for any $\xi,\e\in p_0$\,; equivalently, $\O_{u\!}\bigl(B(\xi),B(\e)\bigr)\in\mathfrak{h}$ and $\dif\!\p\bigl([B(\xi),B(\e)]_u\bigr)\in\r(up_0)$
for any $u\in P$ and $\xi,\e\in p_0$\,. Together with Cartan's second structural equation and Proposition \ref{prop:gen_torsion}\,, this 
completes the proof. 
\end{proof} 

\indent 
Note that, the last statement of Proposition \ref{prop:rho_integrab} could have been proved directly by observing that, under that hypothesis,  
the leaves of $\Cal$ are, locally, projected by $\p$ onto submanifolds of $M$ on which $\nabla$ induces a torsion free connection.\\ 
\indent 
In the following definition, the notations are as in Proposition \ref{prop:rho_integrab}\,. 

\begin{defn} \label{defn:first_Bianchi} 
1) We say that $\nabla$ \emph{satisfies the first Bianchi identity} if it is torsion free 
and \eqref{e:first_Bianchi} holds for any $e_1,e_2,e_3\in E$\,.\\ 
\indent 
2) We say that $\nabla$ \emph{satisfies the first Bianchi identity, with respect to $Z$\,,} if it is torsion free 
and \eqref{e:first_Bianchi} holds for any $p\in Z$ and $e_1,e_2,e_3\in p$\,. 
\end{defn}  

\indent 
Let $E$ be a vector bundle, endowed with a connection $\nabla$, over a manifold $M$. Suppose that $\r:E\to TM$ 
is a morphism of vector bundles and let $[s_1,s_2]=\nabla_{\r\circ s_1}s_2-\nabla_{\r\circ s_2}s_1$\,, 
for any sections $s_1$ and $s_2$ of $E$\,. 
  
\begin{prop} \label{prop:Lie_algebroid}
The following assertions are equivalent:\\ 
\indent 
{\rm (i)} $\nabla$ satisfies the first Bianchi identity;\\ 
\indent 
{\rm (ii)} $(E,[\cdot,\cdot],\r)$ is a Lie algebroid. 
\end{prop}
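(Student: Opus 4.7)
The plan is to verify the Lie algebroid axioms for $(E,[\cdot,\cdot],\r)$ one by one, tracking which ones use torsion freeness and which use the Bianchi equation \eqref{e:first_Bianchi}. The bracket is skew-symmetric by construction, and a direct computation using the Leibniz rule for $\nabla$ gives
\[ [s_1,fs_2]=f[s_1,s_2]+\bigl(\r(s_1)f\bigr)s_2 \]
for any smooth function $f$, so the Leibniz rule for the bracket holds \emph{without} any hypothesis on $\nabla$. Moreover, by the very definition \eqref{e:gen_torsion} of $T$, the identity $\r\circ[s_1,s_2]=[\r\circ s_1,\r\circ s_2]$ (that is, that $\r$ is a morphism of Lie brackets) is equivalent to $T=0$; hence torsion freeness of $\nabla$ is exactly the anchor-compatibility condition.

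The remaining axiom is the Jacobi identity. Assuming $\nabla$ torsion free, I would expand
\[ [[s_1,s_2],s_3]=\nabla_{[\r\circ s_1,\,\r\circ s_2]}s_3-\nabla_{\r\circ s_3}\nabla_{\r\circ s_1}s_2+\nabla_{\r\circ s_3}\nabla_{\r\circ s_2}s_1 \]
and substitute
\[ \nabla_{[\r\circ s_1,\r\circ s_2]}s_3=\nabla_{\r\circ s_1}\nabla_{\r\circ s_2}s_3-\nabla_{\r\circ s_2}\nabla_{\r\circ s_1}s_3-R(\r\circ s_1,\r\circ s_2)s_3 \]
from the definition of the curvature. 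Taking the cyclic sum over $(1,2,3)$, the six double covariant-derivative terms cancel pairwise, and what survives is
\[ \sum_{\rm cyc}[[s_1,s_2],s_3]=-\sum_{\rm cyc}R(\r\circ s_1,\r\circ s_2)s_3\;. \]
Thus, under torsion freeness, Jacobi is equivalent to \eqref{e:first_Bianchi} (applied to arbitrary sections, hence to arbitrary $e_1,e_2,e_3\in E$), which yields (i)$\Rightarrow$(ii). For (ii)$\Rightarrow$(i) one uses the standard fact that in any Lie algebroid the anchor is automatically a bracket morphism --- obtained by substituting $fs_3$ into the Jacobi identity and collecting the coefficient of $s_3$ --- so $T=0$ follows, and then the same cyclic identity extracts \eqref{e:first_Bianchi} from Jacobi.

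The only mildly nontrivial step is the pairwise cancellation of the double covariant-derivative terms in the cyclic sum, which is a short piece of index bookkeeping; conceptually the proposition is just the classical observation that for a torsion free connection on $TM$ the first Bianchi identity and the Jacobi identity for $[X,Y]=\nabla_XY-\nabla_YX$ are one and the same equation, transported verbatim to the anchored setting.
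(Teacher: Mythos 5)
Your proof is correct and is exactly the ``straightforward computation'' that the paper leaves to the reader: the Leibniz rule holds unconditionally, vanishing of the torsion \eqref{e:gen_torsion} is precisely anchor compatibility, and under torsion freeness the cyclic sum of the Jacobi identity reduces (after the pairwise cancellation you verified) to the cyclic curvature sum \eqref{e:first_Bianchi}, with tensoriality passing from sections to arbitrary $e_1,e_2,e_3\in E$. Your recovery of anchor compatibility from Jacobi plus Leibniz in the direction (ii)$\Rightarrow$(i) is the standard argument and is sound.
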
 
\begin{proof} 
This is a straightforward computation. 
\end{proof}

\section{On the integrability of the co-CR quaternionic structures}  \label{section:co-cr_q_integrab} 

\indent 
A \emph{quaternionic vector bundle} is a vector bundle $E$ whose structural group is the Lie group ${\rm Sp}(1)\cdot{\rm GL}(k,\Hq)$ 
acting on $\Hq^{\!k}$ by $\bigl(\pm(a,A),q\bigr)\mapsto aqA^{-1}$, for any $\pm(a,A)\in{\rm Sp}(1)\cdot{\rm GL}(k,\Hq)$ 
and $q\in\Hq^{\!k}$. Then the morphism of Lie groups ${\rm Sp}(1)\cdot{\rm GL}(k,\Hq)\to{\rm SO}(3)$\,, 
$\pm(a,A)\to\pm a$\,, induces an oriented Riemannian vector bundle of rank three whose sphere bundle $Z$ 
is the bundle of \emph{admissible linear complex structures} of $E$.\\ 
\indent 
An \emph{almost co-CR quaternionic structure} on $M$ is a pair $(E,\r)$ where $E$ is a quaternionic vector bundle over $M$ 
and $\r:E\to TM$ is a surjective morphism of vector bundles whose kernel contains no nonzero subspace preserved by some admissible 
linear complex structure of $E$.\\ 
\indent 
By duality, we obtain the notion of \emph{almost CR quaternionic structure}.\\ 
\indent 
Let $(M,E,\r)$ be an almost co-CR quaternionic manifold and let $\nabla$ be a compatible connection on $E$ (that is, 
$\nabla$ is compatible with the structural group of $E$). Then we can construct a complex distribution $\Cal$ on the bundle $Z$ 
of admissible linear complex structures of $E$, as follows. Firstly, for any $J\in Z$, let $\mathcal{B}_J$ be the horizontal lift 
of $\r\bigl({\rm ker}(J+{\rm i})\bigr)$\,, with respect to $\nabla$. Then $\Cal=({\rm ker}\dif\!\p)^{0,1}\oplus\mathcal{B}$ 
is a complex distribution on $Z$ such that $\Cal+\overline{\Cal}=T^{\C\!}Z$\,; that is, $\Cal$ is an \emph{almost co-CR structure} 
on $Z$.\\ 
\indent 
We say that $(M,E,\r,\nabla)$ is \emph{co-CR quaternionic} if $\Cal$ is integrable. Note that, then $\Cal\cap\overline{\Cal}$ is the complexification 
of (the tangent bundle of) a foliation $\Fa$ on $Z$\,; moreover, with respect to it, $\Cal$ is projectable onto complex structures on the local leaf spaces 
of $\Fa$. If there exists a surjective submersion $\p_Z:Z\to Y$ such that ${\rm ker}\dif\!\p_Y=\Fa$ and $\Cal$ is projectable with respect to $\p_Y$ 
(the latter condition is unnecessary if the fibres of $\p_Y$ are connected) 
then the complex manifold $\bigl(Y,\dif\!\p_Y(\Cal)\bigr)$ is the \emph{twistor space} of $(M,E,\r,\nabla)$\,. 
\begin{displaymath}
\xymatrix{
          &  Z \ar[dl]_{\p_Y}   \ar[dr]^{\p}  &   \\
      Y   &                       & M
     }
\end{displaymath} 

\indent 
Note that, if $\r$ is an isomorphism then we obtain the classical notion of \emph{quaternionic manifold} \cite{Sal-dg_qm} 
(see \cite[Remark 2.10(2)]{IMOP}\,). Also, more information on (co-)CR quaternionic manifolds can be found 
in \cite{fq}, \cite{fq_2}, \cite{MP2}, \cite{Pan-twistor_(co-)cr_q}\,.  
 
\begin{thm} \label{thm:co-cr_q_integrab} 
Let $(M,E,\r)$ be an almost co-CR quaternionic manifold and let $Z$ be the bundle of admissible linear complex structures on $E$\,. 
Let $\nabla$ be a compatible connection on $E$\,.\\ 
\indent 
The following assertions are equivalent, where $R$ and $T$ are the curvature form and the torsion of $\nabla$, respectively:\\ 
\indent 
\quad{\rm (i)} $(E,\r,\nabla)$ is integrable;\\ 
\indent 
\quad{\rm (ii)} $R\bigl(\Lambda^{2\!}\bigl(\r\bigl(E^J\bigr)\bigr)\bigr)\bigl(E^J\bigr)\subseteq E^J$ and 
$T\bigl(\Lambda^2\bigl(E^J\bigr)\bigr)\subseteq\r\bigl(E^J\bigr)$, for any $J\in Z$\,, where $E^J={\rm ker}(J+{\rm i})$\,.
\end{thm}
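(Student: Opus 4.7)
My plan is to mirror the argument of Proposition \ref{prop:rho_integrab}\,, working now with a \emph{complex} distribution on $Z$ and replacing the isotropy of a point in a real Grassmannian by the parabolic subgroup stabilising the complex subspace $E^J\subseteq E^{\C}$.

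Fix an admissible linear complex structure $J_0$ on the typical fibre $F=\Hq^{\!k}$ and let $H\subseteq G={\rm Sp}(1)\cdot{\rm GL}(k,\Hq)$ be its stabiliser, so that $Z=P/H$, where $P$ is the frame bundle of $E$\,. The Lie algebra splits as $\mathfrak{g}=\mathfrak{h}\oplus\mathfrak{m}$ with $\mathfrak{m}\cong T_{J_0}S^2$\,, and the standard invariant complex structure on the fibre $S^2=G/H$ of $Z\to M$ induces a decomposition $\mathfrak{m}^{\C}=\mathfrak{m}^{1,0}\oplus\mathfrak{m}^{0,1}$\,. Set $p_0=F^{J_0}={\rm ker}(J_0+{\rm i})\subseteq F^{\C}$ and $\mathfrak{q}:=\mathfrak{h}^{\C}\oplus\mathfrak{m}^{0,1}\subseteq\mathfrak{g}^{\C}$\,. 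Pulling $\Cal$ back to $P$ produces a complex distribution $\Cal_P$ which is generated, as a subbundle of $T^{\C}P$\,, by the fundamental vector fields associated to elements of $\mathfrak{q}$ together with the horizontal vector fields $B(\xi)$ with $\xi\in p_0$\,, where $B$ is the map from Section \ref{section:gen_torsion}\,, extended $\C$-linearly to $F^{\C}$\,. As in the proof of Proposition \ref{prop:rho_integrab}\,, since $P\to Z$ is a surjective submersion, $\Cal$ is integrable if and only if $\Cal_P$ is integrable, which reduces to checking closure under Lie brackets among these generators.

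The brackets split into three types. Within $\mathfrak{q}$\,, closure $[\mathfrak{q},\mathfrak{q}]\subseteq\mathfrak{q}$ is automatic: it is just the statement that $S^2=G/H$ carries an integrable invariant complex structure, equivalently that $\mathfrak{q}$ is a parabolic subalgebra of $\mathfrak{g}^{\C}$\,. For the mixed brackets one uses $[A,B(\xi)]=B(A\xi)$; here the required algebraic observation is that every element of $\mathfrak{q}$ sends $p_0$ into itself: $\mathfrak{h}^{\C}$ preserves $p_0$ because $H$ commutes with $J_0$\,, while a direct computation with the quaternionic relations shows that $\mathfrak{m}^{0,1}$ in fact \emph{annihilates} $p_0$\,. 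Consequently $B(A\xi)$ is either a section of the horizontal part of $\Cal_P$ or vanishes identically, so such brackets are in $\Cal_P$ for free.

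Finally, the brackets $[B(\xi),B(\e)]$ for $\xi,\e\in p_0$ produce the nontrivial conditions. By Proposition \ref{prop:gen_torsion}\,, the horizontal component projects via $\dif\!\p$ to $-T(u\xi,u\e)$\,, and thus belongs to the horizontal part of $\Cal_P$ exactly when $T\bigl(\Lambda^2(E^J)\bigr)\subseteq\r(E^J)$\,; by Cartan's second structural equation the vertical component is the fundamental field of $\O_u\bigl(B(\xi),B(\e)\bigr)$\,, and this lies in $\mathfrak{q}$ exactly when $R(\r\xi,\r\e)$ preserves $E^J$\,, i.e.\ when $R\bigl(\Lambda^{2}(\r(E^J))\bigr)(E^J)\subseteq E^J$\,. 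Together, these are the two conditions of (ii)\,. The main conceptual hurdle I expect is the identification of $\mathfrak{q}$ as the subalgebra of $\mathfrak{g}^{\C}$ stabilising $p_0$\,, which rests on the quaternionic computation that $\mathfrak{m}^{0,1}$ kills $p_0$\,; once this is secured, the remaining steps are formally parallel to the proof of Proposition \ref{prop:rho_integrab}\,.
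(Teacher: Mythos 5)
Your argument is correct, and it is essentially the smooth-category route that the paper only gestures at: the published proof consists of two sentences, reducing the real-analytic case to Proposition \ref{prop:rho_integrab} after a complexification of $(M,E,\r,\nabla)$, and referring to \cite[Theorem A.3]{fq} for ``an extension of Proposition \ref{prop:rho_integrab}'' in the smooth category. What you wrote is that extension carried out explicitly, and it buys a self-contained, category-uniform proof with no real-analyticity hypothesis and no complexification of $M$: the real isotropy algebra $\mathfrak{h}$ of Proposition \ref{prop:rho_integrab} is replaced by $\mathfrak{q}=\mathfrak{h}^{\C}\oplus\mathfrak{m}^{0,1}$, the closure $[\mathfrak{q},\mathfrak{q}]\subseteq\mathfrak{q}$ encodes the integrability of the fibre complex structure, and your key quaternionic computation is right: with $J_0$ given by left multiplication by $i$ and $\mathfrak{m}^{0,1}$ spanned by $j+{\rm i}k$\,, for $\xi\in p_0$ one has $j\xi\in\ker(J_0-{\rm i})$\,, whence $(j+{\rm i}k)\xi=j\xi+{\rm i}\bigl(i(j\xi)\bigr)=0$\,. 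Two points should be made explicit to render the equivalence airtight. First, for the implication (i)$\Rightarrow$(ii) you need that $\mathfrak{q}$ is \emph{exactly} the stabiliser $\{A\in\mathfrak{g}^{\C}\,|\,Ap_0\subseteq p_0\}$\,, not merely contained in it (you state the identification but only sketch one inclusion); the converse inclusion is immediate, since $j-{\rm i}k$ maps $p_0$ isomorphically into $\overline{p_0}$ and $p_0\cap\overline{p_0}=0$\,. Second, the fields $B(\xi)$\,, $\xi\in p_0$\,, span the horizontal part of $\Cal_P$ at every point, so that checking brackets of generators suffices and $\Cal$ has constant rank; this holds because $\r|_{E^J}$ is injective: if $a+{\rm i}b\in E^J$ with $a,b\in\ker\r$\,, then ${\rm span}\{a,b\}$ is a $J$-invariant subspace of $\ker\r$\,, hence zero by the definition of an almost co-CR quaternionic structure. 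With these remarks supplied, your proof is complete; compared with the paper's complexification route, it avoids passing to a holomorphic extension at the cost of redoing the bracket bookkeeping of Proposition \ref{prop:rho_integrab} over $\C$\!.
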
  
\begin{proof} 
Assuming real-analyticity, this follows quickly from Proposition \ref{prop:rho_integrab}\,, after a complexification. 
In the smooth category, this follows from an extension of Proposition \ref{prop:rho_integrab}\,, similar to \cite[Theorem A.3]{fq}\,. 
\end{proof} 

\indent 
Let $M$ be a quaternionic manifold, $\dim M=4k$\,, endowed with a torsion free compatible connection. 
Denote by $L$ the complexification of the line bundle over $M$ characterised by the fact that its $4k$ tensorial power is $\Lambda^{4k}TM$ 
(we use the orientation on $M$ compatible with all of the admissible linear complex structures on it).\\ 
\indent 
Then, at least locally, we have $T^{\C\!}M=H\otimes W$, where $H$ and $W$ are complex vector bundles of rank $2$ and $2k$\,, respectively, 
and the structural group of $H$ is ${\rm SL}(2,\C\!)$ ($H$ and $W$ exist globally if and only if 
the vector bundle generated by the admissible linear complex structures on $M$ is spin).\\ 
\indent 
Denote $H'=(L^*)^{k/k+1}\otimes H$. Then $H'\setminus0$ is endowed with a natural hyper-complex structure 
(\cite{Sal-dg_qm}\,; see \cite{PePoSw-98}\,), such that the projection onto $M$ is twistorial. In particular, 
on endowing $H'\setminus0$ with one of the admissible complex structures (corresponding 
to some imaginary quaternion of length $1$\,) then $H'\setminus0$ is the total space of a holomorphic principal bundle over the 
twistor space $Z$ of $M$, with group $\C\!\setminus\{0\}$\,. We shall denote by $\mathcal{L}$ the dual of the corresponding holomorphic 
line bundle over $Z$\,; note that, if $m$ is even then $\mathcal{L}^m$ is globally defined. 
For example, if $M=\Hq\!P^k$ then $\mathcal{L}$ is just the hyperplane line bundle over $\C\!P^{2k+1}$.\\ 
\indent 
Now, let $U_m=\odot^m(H')^*$, where $\odot$ denotes the symmetric product, $m\in\mathbb{N}$\,.\\ 
\indent 
If $m$ is even then $U_m$ is globally defined and is the complexification of a (real) 
vector bundle which will be denote in the same way (note that, $L^{k/k+1}\otimes U_2$ is just the oriented Riemannian vector bundle of 
rank three generated by the admissible linear complex structures on $M$). 
Let $F$ be a vector bundle over $M$ endowed with a connection whose $(0,2)$ components of its curvature, 
with respect to any admissible linear complex structure on $M$, are zero. 
We endow $\mathcal{F}=(\p^*F)^{\C\!}$ with the (Koszul--Malgrange) holomorphic structure determined by the pull back of the connection 
on $F$ and the complex structure of $Z$.\\ 
\indent 
If $m$ is odd then $U_m$ is a hypercomplex vector bundle. Therefore if $F$ is a hypercomplex vector bundle over $M$ then $U_m\otimes F$ 
is the complexification of a vector bundle which will be denoted in the same way; in the tensor product $U_m$ and $F$ are endowed with $I_1$ and $J_1$\,, respectively, 
whilst the conjugation on $U_m\otimes F$ is $I_2\otimes J_2$, where $I_i$ and $J_i$\,, $i=1,2,3$\,, 
give the linear hypercomplex structures of $U_m$ and $F$, respectively. 
Suppose that $F$ is endowed with a compatible connection whose $(0,2)$ components 
of its curvature, with respect to any admissible linear complex structure on $M$, are zero.  
On endowing $F$ with $J_1$\,, let $\mathcal{F}=\p^*F$ endowed with the holomorphic structure determined by the pull back of the connection 
on $F$ and the complex structure of $Z$.\\ 
\indent 
If $m=1$ the next result gives \cite[Theorem 7.2]{Sal-dg_qm}\,. 

\begin{thm} \label{thm:holo_bundles_over_Z} 
{\rm (a)} There exists a natural co-CR quaternionic structure on the total space of\/ $U_m\otimes F$ whose twistor space is $\mathcal{L}^m\otimes\Fa$.\\ 
\indent 
{\rm (b)} Conversely, let $\mathcal{Z}$ be a holomorphic vector bundle over $Z$ such that:\\ 
\indent 
\quad{\rm (i)} the Birkhoff--Grothendieck decomposition of $\mathcal{Z}$ restricted to each twistor line  
contains only terms of Chern number $m$\,;\\ 
\indent 
\quad{\rm (ii)} $\mathcal{Z}$ is endowed with a conjugation covering the conjugation of $Z$\,.\\ 
\indent 
Then $\mathcal{Z}$ is the twistor space of a co-CR quaternionic manifold, obtained as in {\rm (a)}\,. 
\end{thm}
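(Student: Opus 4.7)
The plan is to exhibit an almost co-CR quaternionic structure $(E,\rho)$ together with a compatible connection on the total space $N=U_m\otimes F$, and then invoke Theorem \ref{thm:co-cr_q_integrab}\,. Using the connection splitting $TN=\pi^*TM\oplus\pi^*(U_m\otimes F)$ induced on $N$ by the given connections (with $\pi\colon N\to M$ the projection), I would assemble $E$ as the pullback to $N$ of a suitable quaternionic vector bundle on $M$ built from $H'$, $W$, and the data of $U_m\otimes F$; the morphism $\rho$ combines the standard identification $H\otimes W\cong T^{\C\!}M$ on the horizontal part with a natural surjection onto the vertical part. The $\mathrm{Sp}(1)\cdot\mathrm{GL}$-structure of $E$ comes from the $H'$-factor, and the kernel of $\rho$ is designed so as to contain no nonzero subspace preserved by any admissible complex structure of $E$, which is forced by the $\mathrm{Sp}(1)$-action on $H'$. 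The connection $\nabla$ on $E$ is then the natural one assembled from the torsion-free quaternionic connection on $M$ and the given connection on $F$.

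\textbf{Integrability and twistor space in part (a).} Setting $E^J=\ker(J+\mathrm i)$, I would check the conditions of Theorem \ref{thm:co-cr_q_integrab} block by block according to the decomposition of $E$. The purely horizontal part of $R\bigl(\Lambda^2\rho(E^J)\bigr)E^J\subseteq E^J$ follows from the classical fact that the curvature of a torsion-free quaternionic connection on $M$ is of type $(1,1)$ with respect to every admissible $J$; the mixed blocks reduce exactly to the hypothesis that the $(0,2)$-components of the curvature of $F$ vanish with respect to every admissible $J$; the vertical block is automatic. The torsion condition $T(\Lambda^2E^J)\subseteq\rho(E^J)$ is handled similarly, using torsion-freeness and the construction of $\nabla$. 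To identify the twistor space, I would observe that a holomorphic section of $\mathcal{L}^m$ over a twistor line $Z_x\cong\C\!P^1$ corresponds to an element of $\odot^m(H'_x)^*=(U_m)_x$\,; coupling with $\mathcal{F}$ shows that the twistor lines in the total space of $\mathcal{L}^m\otimes\mathcal{F}$ are parametrised exactly by $N$, whence that total space is the twistor space.

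\textbf{Reconstruction in part (b).} Hypothesis (i) gives $\mathcal{Z}|_{Z_x}\cong\ol(m)^{\oplus r}$ on every twistor line $Z_x$, so that $\mathcal{Z}\otimes\mathcal{L}^{-m}$ restricts trivially of rank $r$. The space of holomorphic sections of $\mathcal{Z}\otimes\mathcal{L}^{-m}$ along each twistor line assembles, as $x$ varies in $M$, into a holomorphic vector bundle of rank $r$ on the local complexification of $M$; hypothesis (ii) supplies a real structure that descends this to a real vector bundle $F$ on $M$. A Ward--Atiyah-type argument then equips $F$ with a canonical connection whose $(0,2)$-curvature vanishes with respect to every admissible $J$, precisely the hypothesis required by part (a). Applying part (a) to this $F$ produces a co-CR quaternionic manifold on $U_m\otimes F$ with twistor space $\mathcal{L}^m\otimes\mathcal{F}$; under the identification $\mathcal{F}\cong\mathcal{Z}\otimes\mathcal{L}^{-m}$, this twistor space is $\mathcal{Z}$ itself.

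\textbf{Main obstacle.} The principal difficulty is the integrability verification in part (a): the decomposition $H'\otimes(W\oplus\cdots)$ is only locally defined when the relevant spin obstruction fails, so one must confirm that the conditions of Theorem \ref{thm:co-cr_q_integrab} are globally meaningful (they depend only on $E^J$, which is intrinsic) and check each mixed block carefully. A secondary difficulty arises for odd $m$, where $U_m\otimes F$ is the complexification of a real bundle only after the two conjugations are paired as $I_2\otimes J_2$; tracking the distinction between the holomorphic structure defined by $(I_1,J_1)$ and the conjugation given by $(I_2,J_2)$ throughout both directions of the theorem is where most of the bookkeeping lies.
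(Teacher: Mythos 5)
Your overall architecture matches the paper's, and your part (b) is essentially the paper's argument verbatim: twist by $(\mathcal{L}^*)^m$ so that the restriction to each twistor line is trivial, apply the Ward transform to obtain $F$ with a connection whose curvature vanishes on the planes $\{e\otimes f\,|\,f\in V\}$\,, use the conjugation to make $F$ real, and feed it back into part (a). But in part (a) there is a genuine gap: you never construct the almost co-CR quaternionic structure, and the phrase ``a suitable quaternionic vector bundle on $M$ built from $H'$, $W$, and the data of $U_m\otimes F$'' stands exactly where the content of the theorem sits. The difficulty is that the vertical summand $\pi^*(U_m\otimes F)$ of $TN$ is not, in the obvious way, the image of a quaternionic bundle under a surjection with admissible kernel; the paper's solution is to \emph{enlarge} it, setting $E=(U_{m-2}\oplus U_m)\otimes F$ and using the Clebsch--Gordan identity $U_{m-2}\oplus U_m=U_1\otimes U_{m-1}$ to see that $E$ is quaternionic; then, since $U_1=U_1^*\otimes L^{2k/k+1}$ and $TM=U_1^*\otimes V$, the bundle $E\oplus TM$ is quaternionic as well, and the structure on $N=U_m\otimes F$ is $\bigl(\pi^*(E\oplus TM),\rho\bigr)$ with $\rho$ the projection onto $TN=\pi^*(U_m\otimes F)\oplus\pi^*(TM)$ and ${\rm ker}\,\rho=\pi^*(U_{m-2}\otimes F)$\,. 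Without identifying $E$ and ${\rm ker}\,\rho$ you can neither verify the defining co-CR axiom (no nonzero subspace of the kernel preserved by an admissible $J$) nor set up your ``block by block'' check, because $E^J$ mixes the $U_m$- and $U_{m-2}$-summands: for the admissible structure corresponding to $[e]$\,, $\rho(E^J)$ is the direct sum of $\{e\otimes f\,|\,f\in V\}$ and the tensor product of the fibre of $F$ with the polynomials in $U_m=\odot^mU_1$ divisible by $e$\,. Relatedly, the torsion condition is not ``automatic'' on any block: the paper's verification needs that every element of $U_1$ extends to a covariantly constant local section --- which is the entire reason for the twist by $(L^*)^{k/k+1}$, a point your obstacle paragraph treats as bookkeeping --- together with the flatness of the curvature of $F$ on the planes $\{e\otimes f\,|\,f\in V\}$\,.

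A secondary shortfall is the twistor-space identification in (a). Knowing that points of $N$ parametrise holomorphic sections of $\mathcal{L}^m\otimes\mathcal{F}$ over the twistor lines is a consistency check, not a proof: the twistor space is by definition the leaf space of $\Cal\cap\overline{\Cal}$ with the projected complex structure, so one must exhibit a submersion onto $\mathcal{L}^m\otimes\mathcal{F}$ whose fibres are the leaves. The paper does this explicitly, realising $\mathcal{L}$ as the leaf space of the foliation on the pull-back of $P(U_1)$ to $U_1$ given by the horizontal lifts of $[e]\oplus\{e\otimes f\,|\,f\in V\}$\,, deducing that the twistor space of $U_m$ is $\mathcal{L}^m$, and then producing a morphism of vector bundles $\psi$ from the pull-back of $U_m\otimes F$ to $P(U_1)$ onto $\mathcal{L}^m\otimes\mathcal{F}$ with ${\rm ker}\,\dif\!\psi=\V_m$\,, the lifted co-CR foliation. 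Your sections-over-lines observation can very likely be upgraded to this, but as written it does not produce the required map, and hence does not establish that the twistor space is $\mathcal{L}^m\otimes\mathcal{F}$ rather than merely containing a compatible family of lines.
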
 
\begin{proof} 
For simplicity, we work in the complex-analytic category. Thus, in particular, at least locally, a (complex-)quaternionic vector bundle 
is a bundle which is the tensor product of a vector bundle of rank $2$ and another vector bundle; for example, on denoting $V=L^{k/k+1}\otimes W$,  
we have $TM=U_1^*\otimes V$.\\ 
\indent 
Also, let $E=(U_{m-2}\oplus U_m)\otimes F$\,. As $U_{m-2}\oplus U_m=U_1\otimes U_{m-1}$\,, we have $E=U_1\otimes U_{m-1}\otimes F$, 
and, in particular, $E$ is a quaternionic vector bundle. 
Furthermore, by using the fact that the structural group of $L^{k/k+1}\otimes U_1^*$ 
is ${\rm SL}(2,\C\!)$\,, we obtain that $U_1=U_1^*\otimes L^{2k/k+1}$. Hence, also, $E\oplus TM$ is a quaternionic vector bundle.\\ 
\indent 
By using the induced connection on $U_m\otimes F$ we obtain 
\begin{equation} \label{e:for_holo_bundles_over_Z}
\begin{split} 
T(U_m\otimes F)&=\p^*(U_m\otimes F)\oplus\p^*(TM)\;,\\ 
\p^*(E\oplus TM)&=T(U_m\otimes F)\oplus\p^*(U_{m-2}\otimes F)\;, 
\end{split} 
\end{equation} 
where $\p:U_m\otimes F\to M$ is the projection.\\ 
\indent 
Thus, $\p^*(E\oplus TM)$ and the projection $\r$ from it onto $T(U_m\otimes F)$ provide 
an almost co-CR quaternionic structure on $U_m\otimes F$. Furthermore, the connections on $M$ and $F$ induce a compatible connection $\nabla$ on 
$\p^*(E\oplus TM)$\,, which preserves the decomposition given by the second relation of \eqref{e:for_holo_bundles_over_Z}\,. 
Thus, $\nabla$ is flat when restricted to the fibres of $U_m\otimes F$, whilst if $X\in\p^*(TM)$ then $\nabla_X$ is given by the pull back of the connection 
on $E\oplus TM$; in particular, if $X$ and $Y$ are pull backs of local vector fields on $M$ then $\nabla_XY$ is the pull back of 
the covariant derivative of $\dif\!\p(Y)$ along $\dif\!\p(X)$\,.\\ 
\indent 
We have $U_m=\odot^mU_1$\,, where $\odot$ denotes the symmetric product. 
Also, each $e\in U_1$ may be extended to a covariantly constant local section 
of $U_1$ (this is the reason for which the `tensorisation' with $(L^*)^{k/k+1}$ is needed).\\ 
\indent 
In this setting, the bundle of admissible linear complex structures on $M$ is replaced by $P(U_1^*)$ so that if $J$ `corresponds' to $[e]$\,, for some $e\in U_1^*$,  
then ${\rm ker}(J+{\rm i})$ corresponds to the space $\{e\otimes f\,|\,f\in V\,\}$\,. Then, on denoting $\mathcal{E}=\p^*(E\oplus TM)$\,, 
for any nonzero $e\in U_1$\,, we have that $\r(\mathcal{E}^e)$ is isomorphic to the direct sum of $\{e\otimes f\,|\,f\in V\,\}$ 
and the tensor product of the corresponding fibre of $F$ with the space of polynomials from $U_m=\odot^mU_1$ which are divisible by $e$\,.\\ 
\indent 
To verify that condition (ii) of Theorem \ref{thm:co-cr_q_integrab} is satisfied we shall, also, use the fact that $\nabla$ restricted to each fibre of $U_m\otimes F$ 
is flat. This and the fact that $M$ is quaternionic (and endowed with a torsion free connection) quickly implies 
that the curvature form of $\nabla$ satisfies (ii) of Theorem \ref{thm:co-cr_q_integrab}\,. 
For the torsion $T$, it is sufficient to check the condition on pairs of local sections $A,X$ and $X,Y$ from $\mathcal{E}^e$ 
with $A$ induced by a section of $E$ and $X,Y$ induced by sections of $TM$, where $e\in U_1$\,. Then we have $T(A,X)=-\r(\nabla_XA)$ 
and $T(X,Y)$ is the `vertical' component of $-[X,Y]$\,; in particular, $T(X,Y)$ is determined by the curvature form of $U_m\otimes F$, 
applied to $(X,Y)$\,.\\ 
\indent 
Locally, we may assume $L$ trivial so that $U_1=U_1^*$ but, note that, this isomorphism does not preserve the connections 
(the connection on $U_1$ is just the dual of the connection on $U_1^*$). 
Then we may choose $(e_1,e_2)$ a local frame for $U_1^*$ such that it corresponds to $(e^2,-e^1)$\,, where $(e^1,e^2)$ is the dual 
of $(e_1,e_2)$\,, and such that $e_1$ is covariantly constant. Thus, we have to check that $T(A,X)$ and $T(X,Y)$ are contained by $\r(\mathcal{E}^{e_1})$\,, 
where $A$ is the pull back of the tensor product of a local section of $F$ and a polynomial of degree $m$ which is divisible by $e^2$, whilst 
$X=\p^*(e_1\otimes u)$ and $Y=\p^*(e_1\otimes v)$\,, with $u$ and $v$ local sections of $V$. Now, the condition on the torsion follows quickly 
by using the fact that $e_1$ is covariantly constant and the fact that the curvature form of $F$ is zero when restricted to spaces of the form 
$\{e\otimes f\,|\,f\in V\,\}$\,, with $e\in U_1^*$.\\  
\indent 
In the complex-analytic category, the twistor space of $M$ is (locally) the leaf space of the foliation $\V$ on $P(U_1)$ which, at each $[e]\in P(U_1)$\,, 
is the horizontal lift of the space $\{e\otimes f\,|\,f\in V\,\}$\,. Similarly, the twistor space of $U_m\otimes F$ is the leaf space of the foliation 
$\V_m$ on $\p^*\bigl(P(U_1)\bigr)$ 
which at each $\p^*[e]$ is the horizontal lift of $\r(\mathcal{E}^e)$\,.\\ 
\indent 
On the other hand, the pull back of $\mathcal{L}^*\setminus0$ to $P(U_1)$ is the principal bundle whose projection is $U_1\setminus0\to P(U_1)$\,;  
equivalently, the pull back of $\mathcal{L}^*$ to $P(U_1)$ is the tautological line bundle over $P(U_1)$\,.   
This is, further, equivalent to the fact that the pull back of $\mathcal{L}$ to $P(U_1)$ is (locally; globally, if $H^1(M,\C\!\setminus\{0\}\,)$ is zero) 
isomorphic to the quotient of $\p_1^*(U_1)$ through the tautological line bundle over $P(U_1)$\,, 
where $\p_1:U_1\to M$ is the projection. Therefore $\mathcal{L}$ is the leaf space of the foliation on $\p_1^*\bigl(P(U_1)\bigr)$ 
which at each $\p_1^*[e]$ is the horizontal lift of $[e]\oplus\{e\otimes f\,|\,f\in V\,\}$\,.  
Similarly, we obtain that the twistor space of $U_m$ is $\mathcal{L}^m$. Together with $\p_m^*\bigl(P(U_1)\bigr)=U_m+P(U_1)$\,,  
this gives a surjective submersion $\phi_m:U_m+P(U_1)\to\mathcal{L}^m$ which is linear along the fibres of the projection from 
$U_m+P(U_1)$ onto $P(U_1)$\,; that is, $\phi_m$ is a surjective morphism of vector bundles, covering the surjective submersion $P(U_1)\to Z$\,.\\ 
\indent 
Also, the condition on the connection of $F$ is equivalent to the fact that its pull back to $P(U_1)$ is flat when restricted to the leaves of $\V$. Hence, 
the pull back of $F$ to $P(U_1)$ is, also, the pull back of a vector bundle $\Fa$ on $Z$\,. Thus, we, also, have a surjective morphism of vector bundles 
$\phi:F+P(U_1)\to\mathcal{F}$\,, covering $P(U_1)\to Z$\,.\\ 
\indent 
Therefore there exists a morphism of vector bundles $\psi$ from $(U_m\otimes F)+P(U_1)$ onto $\mathcal{L}^m\otimes\mathcal{F}$, 
covering $P(U_1)\to Z$\,. Moreover, ${\rm ker}\dif\!\psi=\V_m$ and, hence, the twistor space of $U_m\otimes F$ is~$\mathcal{L}^m\otimes\mathcal{F}$.\\
\indent 
Conversely, if $\mathcal{Z}$ is a vector bundle over $Z$ satisfying (i) then $\bigl(\mathcal{L}^*\bigr)^m\otimes\mathcal{Z}$ 
restricted to each twistor line is trivial. Thus, it corresponds (through the Ward transform) to a vector bundle $F$ over $M$ endowed with a connection 
whose curvature form is zero when restricted to spaces of the form $\{e\otimes f\,|\,f\in V\,\}$\,, with $e\in U_1$\,. 
Similarly to above, we obtain that $\mathcal{Z}$ is the twistor space of $U_m\otimes F$, and the proof is complete. 
\end{proof}     

\indent 
With the same notations as in Theorem \ref{thm:holo_bundles_over_Z}\,, the projection from $U_m\otimes F$ onto $M$ is the twistorial map 
corresponding to the projection from  $\mathcal{L}^m\otimes\Fa$ onto $Z$. Also, further examples of co-CR quaternionic manifolds can be obtained 
by taking direct sums of bundles $U_m\otimes F$ (with different values for $m$).\\ 
\indent 
Here is another application of Theorem \ref{thm:co-cr_q_integrab}\,. 

\begin{cor} \label{cor:co-cr_q_integrab} 
Let $(M,E,\r)$ be an almost co-CR quaternionic manifold, $\rank E>4$\,, and let $Z$ be the bundle of admissible linear complex structures 
of $E$\,.\\ 
\indent 
If there exists a compatible connection $\nabla$ on $E$ which satisfies the first Bianchi identity, with respect to $Z$\,, 
then $(E,\r,\nabla)$ is integrable.
\end{cor}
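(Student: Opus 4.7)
The plan is to verify the two conditions in part~(ii) of Theorem~\ref{thm:co-cr_q_integrab}\,. Since ``satisfies the first Bianchi identity, with respect to $Z$\,'' already requires $\nabla$ to be torsion free, the condition $T\bigl(\Lambda^2(E^J)\bigr)\subseteq\rho(E^J)$ holds trivially, and it remains to show that $R\bigl(\Lambda^{2\!}\bigl(\rho(E^J)\bigr)\bigr)(E^J)\subseteq E^J$ for every $J\in Z$, using only the algebraic first Bianchi identity \eqref{e:first_Bianchi} applied to triples from $E^J$.

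Working locally (after complexification, in the smooth case), one may write $E^{\mathbb{C}}=H\otimes W$, with $H$ of rank $2$ and structural group ${\rm SL}(2,\mathbb{C})$\,, and $W$ of rank $2k$, where $\rank E=4k$. An admissible linear complex structure $J$ then corresponds to a line $[h]\in P(H)$ and $E^J=h\otimes W$. Compatibility of $\nabla$ with the quaternionic structure makes its complexified curvature split as $R=R_H+R_W$ with $R_H(X,Y)=A(X,Y)\otimes{\rm id}_W$\,, $A(X,Y)\in\mathfrak{sl}(H)$\,, and $R_W(X,Y)={\rm id}_H\otimes B(X,Y)$\,, $B(X,Y)\in\mathfrak{gl}(W)$\,. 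The $R_W$-piece automatically preserves $E^J=h\otimes W$, so the whole issue reduces to showing that $A\bigl(\rho(e_1),\rho(e_2)\bigr)h\in\mathbb{C}h$ for all $e_1,e_2\in E^J$.

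Picking a basis $\{h,h'\}$ of $H$ and writing $A(X,Y)h=\alpha(X,Y)h+\beta(X,Y)h'$, the first Bianchi identity applied to $e_i=h\otimes w_i\in E^J$, projected onto the $h'$-component of the decomposition $H\otimes W=(\mathbb{C}h\otimes W)\oplus(\mathbb{C}h'\otimes W)$ (into which the $R_W$-part does not contribute, since it lands in $h\otimes W$), yields
\[
\tilde\beta(w_1,w_2)\,w_3+\tilde\beta(w_2,w_3)\,w_1+\tilde\beta(w_3,w_1)\,w_2=0\quad\text{in }W,
\]
where $\tilde\beta(w,w')=\beta\bigl(\rho(h\otimes w),\rho(h\otimes w')\bigr)$ is skew-symmetric and bilinear. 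The main step is then the elementary linear-algebra fact that any skew bilinear form on a vector space $W$ with $\dim W\geq3$ satisfying such a three-term relation vanishes identically: one picks $w_1,w_2,w_3$ linearly independent in order to force each $\tilde\beta(w_i,w_j)$ to be zero, and then handles arbitrary pairs by skew-symmetry together with extension to a linearly independent triple. This is precisely where the hypothesis $\rank E>4$ (i.e.\ $\dim W=2k\geq4$) enters; for $k=1$ the three-term relation would be vacuous and the argument genuinely fails. Hence $\beta$ vanishes on $\rho(E^J)\times\rho(E^J)$\,, giving $A\bigl(\rho(e_1),\rho(e_2)\bigr)h\in\mathbb{C}h$ and completing the verification of Theorem~\ref{thm:co-cr_q_integrab}(ii)\,.
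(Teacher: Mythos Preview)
Your proof is correct and follows essentially the same approach as the paper: both reduce the curvature condition in Theorem~\ref{thm:co-cr_q_integrab}(ii) to the $H$-part of the curvature via the local splitting $E^{\mathbb{C}}=H\otimes F$ (your $W$), and then use the three-term cyclic identity together with $\rank F>2$ to force the off-diagonal component of $R^H(\rho(e\otimes f_1),\rho(e\otimes f_2))e$ to vanish. The paper phrases the key step as ``each term of the cyclic sum must individually lie in $e\otimes F$'' rather than isolating the scalar form $\tilde\beta$, but the underlying linear-algebra argument is the same.
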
 
\begin{proof}  
Locally, we may suppose $E^{\C\!}=H\otimes F$, where $H$ and $F$ are complex vector bundles with $\rank H=2$\,. Moreover, 
the following hold:\\ 
\indent 
\quad(1) $Z=PH$ such that if $J\in Z$ corresponds to $[e]\in PH$ then $$E^J=\{e\otimes f\,|\,f\in F_{\p(e)}\}\;,$$ where $\p$ is the projection.\\ 
\indent 
\quad(2) $\nabla=\nabla^H\otimes\nabla^F$ for some connections $\nabla^H$ on $H$ and $\nabla^F$ on $F$.\\ 
\indent 
By Theorem \ref{thm:co-cr_q_integrab}\,, we have to show that, for any $e\in H$ and $f_1,f_2,f_3\in F$, we have 
$$R\bigl(\r(e\otimes f_1),\r(e\otimes f_2)\bigr)(e\otimes f_3)\in\{e\otimes f\,|\,f\in F_{\p(e)}\}\;;$$
equivalently, $R^E\bigl(\r(e\otimes f_1),\r(e\otimes f_2)\bigr)e$ is proportional to $e$\,, where $R^E$ is the curvature form of $\nabla^E$.\\ 
\indent 
We know that, for any $e\in H$ and $f_1,f_2,f_3\in F$, we have 
$$R\bigl(\r(e\otimes f_1),\r(e\otimes f_2)\bigr)(e\otimes f_3)+\textrm{circular\;permutations}=0\;,$$ 
which implies  
\begin{equation} \label{e:for_integrab_from_Bianchi} 
\bigl(R^E\bigl(\r(e\otimes f_1),\r(e\otimes f_2)\bigr)e\bigr)\otimes f_3+\textrm{circular\;permutations}\in\{e\otimes f\,|\,f\in F_{\p(e)}\}\;. 
\end{equation} 
\indent 
As $\rank E>4$\,, we have $\rank F>2$\,. Therefore \eqref{e:for_integrab_from_Bianchi} holds, for any $e\in H$ and $f_1,f_2,f_3\in F$, 
if and only if each term of the left hand side of \eqref{e:for_integrab_from_Bianchi} is contained by $\{e\otimes f\,|\,f\in F_{\p(e)}\}$\,, 
for any $e\in H$ and $f_1,f_2,f_3\in F$. The proof is complete. 
\end{proof} 

\indent 
Note that, in the proof of Corollary \ref{cor:co-cr_q_integrab} it is not used the fact that $\rank H=2$\,. 

\begin{prop} \label{prop:co-cr_q_with_first_Bianchi} 
Let $(M,E,\r)$ be an almost co-CR quaternionic manifold such that $\rank E>4$ and there exists a compatible connection 
$\nabla$ on $E$ which satisfies the first Bianchi identity.\\ 
\indent  
Then, locally, ${\rm ker}\r$ can be endowed with a quaternionic structure such that the projection from ${\rm ker}\r$ onto $M$ is a 
twistorial map. 
\end{prop}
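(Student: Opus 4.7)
The plan is to apply Theorem~\ref{thm:co-cr_q_integrab} to the total space of $V=\ker\rho$, regarded as a manifold with candidate quaternionic data pulled back from $M$. First note that Corollary~\ref{cor:co-cr_q_integrab} applies, since the first Bianchi identity in particular holds with respect to $Z$ and $\rank E>4$; hence $(E,\rho,\nabla)$ is integrable, giving $M$ a co-CR quaternionic structure with a (local) twistor space $Y$. Moreover, by Proposition~\ref{prop:Lie_algebroid}, $(E,[\cdot,\cdot],\rho)$ is a Lie algebroid, so $V=\ker\rho$ is a subbundle of Lie algebras of $E$.

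Writing $\pi\colon V\to M$ for the projection, I would endow the total space of $V$ with the data $(\widetilde E,\widetilde\rho,\widetilde\nabla)$, where $\widetilde E=\pi^*E$ is the pullback quaternionic bundle, $\widetilde\nabla=\pi^*\nabla$ is the pullback compatible connection, and $\widetilde\rho\colon\widetilde E\to TV$ is an isomorphism of vector bundles to be constructed. Once $\widetilde\rho$ is an isomorphism, integrability of $(\widetilde E,\widetilde\rho,\widetilde\nabla)$ will be, by definition, a quaternionic structure on $V$. To construct $\widetilde\rho$ I would use the two natural short exact sequences of vector bundles on $V$,
$$0\to\pi^*V\to\pi^*E\to\pi^*(TM)\to0\quad\text{and}\quad 0\to\pi^*V\to TV\to\pi^*(TM)\to0\,,$$
the first being the pullback of $0\to V\to E\to TM\to0$, and the second coming from the inclusion $V\hookrightarrow E$ together with the connection $\nabla$. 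The Lie algebroid structure from Proposition~\ref{prop:Lie_algebroid} would then be used to provide the canonical equivalence between these two extensions, which yields $\widetilde\rho$.

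Once $\widetilde\rho$ is in place, I would verify the conditions of Theorem~\ref{thm:co-cr_q_integrab} for $(\widetilde E,\widetilde\rho,\widetilde\nabla)$. The curvature of $\widetilde\nabla$ is $\pi^*R$, so the curvature condition on $\widetilde E^J=\pi^*(E^J)$ follows from the analogous condition for $(E,\rho,\nabla)$, itself a consequence of the first Bianchi identity. The torsion condition reduces to a direct calculation using that $\nabla$ is torsion free and that, by construction of $\widetilde\rho$, the bracket on $\widetilde E$ transported through $\widetilde\rho$ agrees with the pullback of the Lie algebroid bracket on $E$. Finally, the projection $V\to M$ will be twistorial because $\pi^*Z$, the bundle of admissible complex structures of $\widetilde E$, projects holomorphically onto $Z$, and this projection descends to a holomorphic map of the corresponding (local) twistor spaces. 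The hard part will be the intrinsic construction of $\widetilde\rho$ as a canonical isomorphism: the two extensions above are not a priori canonically equivalent, and the equivalence is exactly what the Lie algebroid identity furnishes, which is precisely why the first Bianchi identity itself, rather than only its form with respect to $Z$, is needed in the hypothesis.
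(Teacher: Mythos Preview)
Your overall strategy coincides with the paper's: pull everything back to the total space of $V=\ker\rho$, identify $TV$ with $\pi^*E$ so that $\pi^*\nabla$ becomes a compatible connection on an almost quaternionic manifold, and then check integrability. You also correctly isolate the crux: producing an isomorphism $\widetilde\rho:\pi^*E\to TV$ for which $\pi^*\nabla$ is torsion free.

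The gap is precisely at that crux. The two extensions
\[
0\to\pi^*V\to\pi^*E\to\pi^*(TM)\to0
\qquad\text{and}\qquad
0\to\pi^*V\to TV\to\pi^*(TM)\to0
\]
are \emph{not} canonically equivalent merely from the Lie algebroid axioms; an identification amounts to choosing compatible splittings of both. The paper supplies these splittings by extracting two concrete consequences of the first Bianchi identity (equivalently, of Proposition~\ref{prop:Lie_algebroid}) that your proposal does not mention:
\begin{itemize}
\item[(a)] $\nabla$ restricts to a \emph{flat} connection $\nabla^v$ on $V$ (this is the Jacobi identity applied to two arbitrary sections of $E$ and one section of $V$, using surjectivity of $\rho$). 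Flatness is what makes the horizontal splitting of $TV$ well-behaved and allows one to work with covariantly constant local frames of $V$.
\item[(b)] A local section $\iota:TM\to E$ of $\rho$ whose image is closed under the Lie algebroid bracket, obtained from \cite[Theorem~2.2]{KMa-BLMS95} on transitive Lie algebroids. This is the splitting of the first sequence.
\end{itemize}
With (a) and (b) in hand, $\widetilde\rho$ is defined by matching the two resulting direct-sum decompositions $\pi^*E=\pi^*V\oplus\pi^*(\mathrm{im}\,\iota)$ and $TV=\pi^*V\oplus\H^{\nabla^v}$. The paper then checks directly that $\pi^*\nabla$ is torsion free, and this computation uses both ingredients: flatness of $\nabla^v$ kills the curvature term in $[X,Y]$ for horizontal $X,Y$, while bracket-closedness of $\mathrm{im}\,\iota$ ensures $\nabla_{X'}\iota(Y')-\nabla_{Y'}\iota(X')=\iota([X',Y'])$, so no $V$-component appears. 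Without the special choice of $\iota$, an arbitrary splitting produces a nonzero torsion.

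Your sentence ``the bracket on $\widetilde E$ transported through $\widetilde\rho$ agrees with the pullback of the Lie algebroid bracket on $E$'' is, once unwound, exactly the statement that $\pi^*\nabla$ is torsion free with respect to $\widetilde\rho$; so invoking it to prove torsion-freeness is circular. To close the argument you need (a) and (b), or an equivalent substitute; the paper's route through \cite{KMa-BLMS95} is the missing step.
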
    
\begin{proof} 
{}From Proposition \ref{prop:Lie_algebroid} and \cite[Theorem 2.2]{KMa-BLMS95} it follows that, locally, there exists 
a section $\iota:TM\to E$ of $\r$ such that for any sections $s_1$ and $s_2$ of the vector subbundle ${\rm im}\,\iota\subseteq E$ 
we have that $\nabla_{\r\circ s_1}s_2-\nabla_{\r\circ s_2}s_1$ is a section of ${\rm im}\,\iota$\,. In particular, we have 
$E={\rm ker}\r\oplus TM$, where we have used the obvious isomorphism $TM={\rm im}\,\iota$\,.\\ 
\indent 
Furthermore, Proposition \ref{prop:Lie_algebroid} quickly implies that $\nabla$ restricts to a flat connection $\nabla^v$ 
on ${\rm ker}\r$\,. Locally, we may suppose that $\nabla^v$ is the trivial connection corresponding to some trivialization 
of ${\rm ker}\r$\,; that is, ${\rm ker}\r$ is generated by (global) sections which are covariantly constant, with respect to $\nabla$.\\ 
\indent 
Let $\p:{\rm ker}\r\to M$ be the projection. Note that, we have two decompositions $\p^*E=\p^*({\rm ker}\r)\oplus\p^*(TM)$ 
and $T({\rm ker}\r)=\p^*({\rm ker}\r)\oplus\p^*(TM)$\,, where the latter is induced by $\nabla^v$. Therefore we have a vector 
bundle isomorphism $T({\rm ker}\r)=\p^*E$ which depends only of $\iota$ (and the given co-CR quaternionic structure). 
Hence, ${\rm ker}\r$ is endowed with an almost quaternionic structure.\\  
\indent 
To complete the proof it is sufficient to prove that $\p^*\nabla$ is torsion free. Indeed, let $U,V$ be sections of $\p^*({\rm ker}\r)$ 
induced by sections of ${\rm ker}\r$ which are covariantly constant, with respect to $\nabla$, and let $X,Y$ be sections of 
$\p^*(TM)$ induced by vector fields on $M$; in particular, $X,Y$ are projectable, with respect to $\dif\!\p$\,.\\ 
\indent 
Then we have that all of $[U,V]$\,, $[U,X]$\,, $(\p^*\nabla)_UV$, $(\p^*\nabla)_VU$\,, $(\p^*\nabla)_UX$\,, $(\p^*\nabla)_XU$ are zero. 
Also, as $\nabla$ is torsion free, we have $(\p^*\nabla)_XY-(\p^*\nabla)_YX-[X,Y]=0$\,, thus, completing the proof. 
\end{proof}   

\indent 
Let $N$ be a quaternionic-K\"ahler manifold and let $\nabla$ be its Levi--Civita connection. If $M\subseteq N$ is a hypersurface or a CR quaternionic submanifold 
\cite{MP2} then the following assertions are equivalent:\\ 
\indent 
\quad(i) $\nabla$ restricted to $TN|_M$ satisfies the first Bianchi identity;\\ 
\indent 
\quad(ii) $M$ is geodesic and the normal connection is flat.

\section{On the integrability of the $f$-quaternionic structures} \label{section:f-q_integrab} 

\indent 
An \emph{almost $f$-quaternionic structure} \cite{fq_2} on a manifold $M$ is a pair $(E,V)$\,, where $E$ is a quaternionic vector bundle over $M$, 
with $V,TM\subseteq E$ vector subbundles such that $E=V\oplus TM$ and $JV\subseteq TM$, for any admissible linear complex structure 
$J$ of~$E$. Then $(E,\iota)$ and $(E,\r)$ are almost CR quaternionic and almost co-CR quaternionic structures  
on $M$, where $\iota:TM\to E$ and $\r:E\to TM$ are the inclusion and the projection, respectively.\\ 
\indent 
Any almost $f$-quaternionic structure on $M$ corresponds to a reduction of its frame bundle to the group $G_{l,m}$ 
of \emph{$f$-quaternionic linear isomorphisms} of $({\rm Im}\Hq\!)^l\times\Hq^{\!m}$, in particular $\dim M=3l+4m$\,. 
More precisely, $G_{l,m}={\rm GL}(l,\R)\times\bigl({\rm Sp}(1)\cdot{\rm GL}(m,\Hq)\bigr)$\,, 
where ${\rm Sp}(1)\cdot{\rm GL}(m,\Hq)$ acts canonically on $\Hq^{\!m}$, whilst the action of $G_{l,m}$ on $({\rm Im}\Hq\!)^l=\R^l\otimes{\rm Im}\Hq$ 
is given by the tensor product of the canonical representations of ${\rm GL}(l,\R)$ and ${\rm SO}(3)$ on $\R^l$ and ${\rm Im}\Hq$, 
respectively, and the canonical morphisms of Lie groups from $G_{l,m}$ onto ${\rm GL}(l,\R)$ and ${\rm SO}(3)$\,. 
Furthermore, $G_{l,m}$ is isomorphic to the group of quaternionic linear isomorphisms of $\Hq^{\!l+m}$ which preserve both $\R^l$ and 
$({\rm Im}\Hq\!)^l\times\Hq^{\!m}$.\\  
\indent 
Consequently, any almost $f$-quaternionic structure on $M$, also, corresponds to a decomposition $TM=(V\otimes Q)\oplus W$, 
where $V$ is a vector bundle, $Q$ is an oriented Riemannian vector bundle of rank three, and $W$ is a quaternionic vector bundle 
such that the frame bundle of $Q$ is the principal bundle induced by the frame bundle of $W$ through the canonical morphism of Lie groups 
${\rm Sp}(1)\cdot{\rm GL}(m,\Hq)\to{\rm SO}(3)$\,, where $\rank W=4m$\,.\\ 
\indent 
Thus, any connection $\nabla$ on $E$ compatible with $G_{l,m}$ induces a connection $D$ on $M$ 
such that $\nabla=D^V\oplus D$, where $D^V$ is the connection induced by $D$ on $V$; then we say that $D$ 
is \emph{compatible with $(E,V)$}\,. Moreover, if we denote by $T^E$ and $T$ the torsions of $\nabla$ and $D$, 
respectively, then $T^E=\r^*T$; in particular, $\nabla$ is torsion free if and only if $D$ is torsion free.\\ 
\indent  
Furthermore, $D=\bigl(D^V\otimes D^Q\bigr)\oplus D^W$, where $D^W$ is a compatible connection on the quaternionic vector bundle $W$,  
and $D^Q$ is the connection induced by $D^W$ on $Q$. In particular, 
if $D$ is torsion free then $V\otimes Q$ and $W$ are foliations on $M$, 
and the leaves of the latter are quaternionic manifolds. 

\begin{cor} \label{cor:f-q_D} 
Let $(E,V)$ be an almost $f$-structure on $M$ and let $\nabla$ be the connection on $E$ induced 
by some connection $D$ on $M$, compatible with $(E,V)$\,. Let $\r:E\to TM$ be the projection, 
$T$ the torsion of $D$, and $R^{Q}$ the curvature form of the connection induced on $Q$\,.\\ 
\indent 
Then $(E,\r,\nabla)$ is integrable if and only if, for any $J\in Z$\,, we have 
\begin{equation} \label{e:f-q_D} 
\begin{split} 
&T\bigl(\Lambda^{2\!}\bigl(\r\bigl(E^J\bigr)\bigr)\bigr)\subseteq\r\bigl(E^J\bigr)\;,\\ 
&R^Q\bigl(\Lambda^{2\!}\bigl(\r\bigl(E^J\bigr)\bigr)\bigr)(J)\subseteq(J'+{\rm i}J'')^{\perp}\;, 
\end{split} 
\end{equation}  
where $E^J={\rm ker}(J+{\rm i})$\,, and $J',J''\in Z$ such that $(J,J',J'')$ is a positive orthonormal frame. 
\end{cor}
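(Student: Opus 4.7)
The plan is to apply Theorem \ref{thm:co-cr_q_integrab} to $(M, E, \r, \nabla)$ and translate each of its two integrability conditions into one of the lines of \eqref{e:f-q_D}.

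The torsion condition is essentially immediate. From $\nabla = D^V \oplus D$ and the defining formula \eqref{e:gen_torsion}, a one-line computation gives $T^\nabla(e_1, e_2) = T(\r(e_1), \r(e_2))$, i.e.\ $T^\nabla = \r^* T$, as already recorded just above the corollary. The requirement $T^\nabla(\Lambda^2 E^J) \subseteq \r(E^J)$ of Theorem \ref{thm:co-cr_q_integrab}(ii) is therefore exactly the first line of \eqref{e:f-q_D}.

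For the curvature, I would exploit the splitting of the Lie algebra of $G_{l,m}$ as $\mathfrak{gl}(l,\R) \oplus \mathfrak{sp}(1) \oplus \mathfrak{gl}(m,\Hq)$. The first and third summands commute with left multiplication by any imaginary quaternion, so their contribution to $R^\nabla$ automatically preserves each eigenspace $E^J$; only the $\mathfrak{sp}(1)$-component of $R^\nabla$ matters for the integrability condition, and through the adjoint representation $\mathfrak{sp}(1) \to \mathfrak{so}(Q)$ this component is precisely the curvature $R^Q$ of the connection induced on $Q$. It then remains to check, by direct linear algebra in a positive orthonormal frame $(J, J', J'')$, that an element $q = \alpha J + \beta J' + \gamma J'' \in \mathfrak{sp}(1)^{\C\!}$ preserves $E^J$ if and only if its image under the adjoint action $[q, J] \in Q^{\C\!}$ lies in $(J' + {\rm i}J'')^\perp$. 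Using that $J$ acts on $E^J$ as the scalar $-{\rm i}$ and that $J''|_{E^J} = {\rm i}\,J'|_{E^J}$ (a consequence of $JJ' = J''$), both conditions reduce to the single equation $\beta + {\rm i}\gamma = 0$.

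The main obstacle is this last matching step: a careful bookkeeping exercise distinguishing the quaternionic units $J, J', J''$ from the complex scalar ${\rm i} = \sqrt{-1}$ and handling the identification $\mathfrak{sp}(1) \cong Q$ with the correct sign conventions for the adjoint action. Once that is in place, everything else is a direct translation via Theorem \ref{thm:co-cr_q_integrab}.
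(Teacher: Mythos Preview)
Your proposal is correct and follows essentially the same route as the paper: both reduce the curvature condition of Theorem~\ref{thm:co-cr_q_integrab} to a statement about $R^Q$ via the observation that only the $\mathfrak{sp}(1)$-part of $R^E$ can fail to preserve $E^J$, and then match this with the orthogonality condition in~\eqref{e:f-q_D}. The paper organises the final linear algebra slightly differently---writing $R^Q(X,Y)J=[R^E(X,Y),J]$ directly, deriving $\bigl(R^Q(\r(A),\r(B))J\bigr)C=-(J+{\rm i})\bigl(R^E(\r(A),\r(B))C\bigr)$, and then decomposing $R^Q(\cdot,\cdot)J$ along $J'\pm{\rm i}J''$---but this is the same computation you outline, just packaged without the explicit coordinates $\alpha,\beta,\gamma$.
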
 
\begin{proof} 
Let $D^V$ be the connection induced on $V$ and $T^E$ the torsion of $\nabla$. Because $T^E=\r^*T$, 
from Theorem \ref{thm:co-cr_q_integrab} we obtain that it is sufficient to prove that, for any $J\in Z$\,, 
the second relation of \eqref{e:f-q_D} holds if and only if  
\begin{equation} \label{e:f-q_D_1} 
R^E\bigl(\Lambda^{2\!}\bigl(\r\bigl(E^J\bigr)\bigr)\bigr)\bigl(E^J\bigr)\subseteq E^J\;, 
\end{equation} 
where $R^E$ is the curvature form of $\nabla$.\\ 
\indent 
We have $R^Q(X,Y)J=\bigl[R^E(X,Y),J\bigr]$\,, for any $J\in Z$ and $X,Y\in TM$.  
Therefore if $J\in Z$ and $A,B,C\in E^J$ then $$\bigl(R^Q\bigl(\r(A),\r(B)\bigr)J\bigr)C=-(J+{\rm i})\bigl(R^E\bigl(\r(A),\r(B)\bigr)C\bigr)\;.$$ 
As, up to a nonzero factor, $J+{\rm i}$ is the projection from $E^{\C}$ onto $\overline{E^J}$, we have that \eqref{e:f-q_D_1} holds 
if and only if $R^Q\bigl(\Lambda^{2\!}\bigl(\r\bigl(E^J\bigr)\bigr)\bigr)\bigl(E^J\bigr)=0$. But, for any $X,Y\in TM$, we have 
$R^Q(X,Y)J=\a(X,Y)(J'+{\rm i}J'')+\b(X,Y)(J'-{\rm i}J'')$\,, for some two-forms $\a$ and $\b$\,.\\ 
\indent 
To complete the proof just note that the obvious relation $J'+{\rm i}J''=J'\circ(1-{\rm i}J)$ 
and its conjugate imply that $(J'+{\rm i}J'')\bigl(E^J\bigr)=0$ 
whilst $J'-{\rm i}J''$ maps $E^J$ isomorphically onto $\overline{E^J}$.  
\end{proof}  

\begin{prop} \label{prop:f-q_rV>1_rW>0} 
Let $(E,V)$ be an almost $f$-structure on $M$ and let $\nabla$ be the connection on $E$ induced 
by some torsion free connection on $M$, compatible with $(E,V)$\,.\\ 
\indent 
If $\rank E>4\rank V\geq8$ then the connection induced on $Q$ is flat. 
\end{prop}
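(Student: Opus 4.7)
\noindent\emph{Plan.} The starting point is the decompositions $TM=(V\otimes Q)\oplus W$ and $D=(D^V\otimes D^Q)\oplus D^W$ induced by the almost $f$-quaternionic structure, so that $D$ preserves both summands and $R^D$ is correspondingly block diagonal: it acts on $V\otimes Q$ as $R^V\otimes 1+1\otimes R^Q$ and on $W$ as $R^W$. Since $D^Q$ is induced by $D^W$, the curvature $R^Q$ is the $\mathfrak{sp}(1)$-component of $R^W\in\Omega^2\bigl(M,\mathfrak{sp}(1)\oplus\mathfrak{gl}(m,\Hq\!)\bigr)$, and the natural map $\mathfrak{sp}(1)\oplus\mathfrak{gl}(m,\Hq\!)\to\operatorname{End}(W)$ is injective for $m\geq 1$ (which is equivalent to $\rank E>4\rank V$); hence a vanishing of $R^W$ on any subspace of $\Lambda^2 TM$ already forces the corresponding vanishing of $R^Q$. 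The approach is to apply the classical first Bianchi identity for the torsion free $D$ on $TM$ to triples distributed between $V\otimes Q$ and $W$ in three distinct proportions, and show that $R^Q(X,Y)=0$ for $X,Y\in V\otimes Q$, for $X,Y\in W$, and for $X\in V\otimes Q$, $Y\in W$, respectively.

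\medskip

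The first two cases are immediate. For $X_1,X_2\in V\otimes Q$ and $X_3\in W$, both $R^D(X_2,X_3)X_1$ and $R^D(X_3,X_1)X_2$ lie in $V\otimes Q$, so projecting the Bianchi identity onto $W$ yields $R^W(X_1,X_2)X_3=0$; hence $R^W(V\otimes Q,V\otimes Q)=0$ and so $R^Q(V\otimes Q,V\otimes Q)=0$. For $X_1\in V\otimes Q$ and $X_2,X_3\in W$, projecting onto $V\otimes Q$ gives $R^V(X_2,X_3)\otimes 1+1\otimes R^Q(X_2,X_3)=0$ as an endomorphism of $V\otimes Q$; the elementary algebraic lemma that $A\otimes 1+1\otimes B=0$ forces $A=c\cdot 1_V$ and $B=-c\cdot 1_Q$ for some scalar $c$, combined with the tracelessness of $R^Q\in\mathfrak{so}(Q)$, gives $c=0$ and so $R^Q(W,W)=0$.

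\medskip

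The delicate mixed case is where $\rank V\geq 2$ enters essentially. Take $X_1=v\otimes q$, $X_2\in W$ and $X_3=v'\otimes q'$ with $(v,v')$ linearly independent in $V$ and $(q,q')$ linearly independent in $Q$. Using $R^W(V\otimes Q,V\otimes Q)=0$ from the previous paragraph, the Bianchi identity reduces to an equation inside $V\otimes Q$; decomposing $V=\langle v\rangle\oplus\langle v'\rangle\oplus V_0$ and extracting the $\langle v\rangle\otimes Q$- and $\langle v'\rangle\otimes Q$-components yields the crucial constraint $R^Q(v\otimes q,X_2)\,q'\in\langle q,q'\rangle$, which extends to all $q'\in Q$ by linearity. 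Writing $T=R^Q(v\otimes q,X_2)\in\mathfrak{so}(Q)$ as $Tq'=\omega\times q'$ under the cross-product identification $\mathfrak{so}(Q)\cong Q$, this constraint reads $(\omega\times q')\cdot(q\times q')=0$, equivalently $(\omega\cdot q)|q'|^2=(\omega\cdot q')(q\cdot q')$, a polynomial identity in $q'$ which forces $\omega=0$ whenever $q\neq 0$. Hence $R^Q(V\otimes Q,W)=0$, and combined with the previous two cases, $R^Q=0$ on $\Lambda^2 TM$. The principal obstacle is this last step: performing the three-way decomposition of $V\otimes Q$ (where $\rank V\geq 2$ is essential) to isolate the $\langle q,q'\rangle$-constraint, and then running the cross-product polynomial-identity argument to conclude $\omega=0$.
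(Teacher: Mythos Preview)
Your proof is correct and follows essentially the same route as the paper: both apply the first Bianchi identity for the torsion free $D$ to triples distributed among $V\otimes Q$ and $W$ in the three possible ways, using the block form $R^D=(R^V\otimes 1+1\otimes R^Q)\oplus R^W$ and the implication $R^W=0\Rightarrow R^Q=0$ to kill $R^Q$ on each piece of $\Lambda^2TM$. The only cosmetic difference is in the mixed case $R^Q(V\otimes Q,W)$: the paper runs two separate Bianchi computations with specific orthonormal frames (its equations \eqref{e:f-q_rV>1_rW>0_3} and \eqref{e:f-q_rV>1_rW>0_5}) to read off the two components of $R^Q(X,S\otimes J)J'$ directly, whereas you perform one computation with a general $q'$ to obtain $R^Q(v\otimes q,X_2)q'\in\langle q,q'\rangle$ and then close with the cross-product polynomial identity; both arguments use $\rank V\geq 2$ at exactly the same spot (the choice of linearly independent $S,U$, respectively $v,v'$).
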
 
\begin{proof} 
Let $TM=(V\otimes Q)\oplus W$ be the decomposition corresponding to $(E,V)$\,. Note that, 
$\rank E>4\rank V$ if and only if $\rank W>0$\,. Also, for any $J\in Z\,(\subseteq Q)$ and $U\in V$, 
we have $JU=U\otimes J$.\\ 
\indent 
Let $D$\,, $D^V$, $D^Q$, $D^W$ be the connections induced on $TM$, $V$, $Q$\,, $W$, 
and let $R^M$, $R^V$, $R^Q$, $R^W$ be their curvature forms, respectively;  
recall that, $D=\bigl(D^V\otimes D^Q\bigr)\oplus D^W$.\\ 
\indent 
Now, firstly, let $J\in Z$\,, $U\in V$ and $X,Y\in W$. From the first Bianchi identity applied to $D$ we obtain 
$$R^M(X,Y)(U\otimes J)+R^M(Y,U\otimes J)X+R^M(U\otimes J,X)Y=0\;;$$ 
hence, we, also, have  
\begin{equation} \label{e:f-q_rV>1_rW>0_1} 
\bigl(R^V(X,Y)U\bigr)\otimes J+U\otimes\bigl(R^Q(X,Y)J\bigr)=0\;. 
\end{equation} 
As $R^Q(X,Y)J\in J^{\perp}$, from \eqref{e:f-q_rV>1_rW>0_1} we obtain $R^Q(X,Y)J=0$\,.\\ 
\indent 
Secondly, let $J,J'\in Z$\,, $S,U\in V$, and $X\in W$. Then we have 
\begin{equation} \label{e:f-q_rV>1_rW>0_2} 
R^M(X,S\otimes J)(U\otimes J')+R^M(S\otimes J,U\otimes J')X+R^M(U\otimes J',X)(S\otimes J)=0\;. 
\end{equation}  
\indent 
Relation \eqref{e:f-q_rV>1_rW>0_2} implies $R^W(S\otimes J,U\otimes J')X=0$\,, and, as this holds for any $X\in W$, 
we obtain $R^Q(S\otimes J,U\otimes J')=0$\,.\\ 
\indent 
Furthermore, with $J=J'$, relation \eqref{e:f-q_rV>1_rW>0_2}\,, also, gives 
\begin{equation} \label{e:f-q_rV>1_rW>0_3} 
\begin{split} 
&\bigl(R^V(X,S\otimes J)U\bigr)\otimes J+U\otimes\bigl(R^Q(X,S\otimes J)J\bigr)\\ 
&+\bigl(R^V(U\otimes J,X)S\bigr)\otimes J+S\otimes\bigl(R^Q(U\otimes J,X)J\bigr)=0\;. 
\end{split} 
\end{equation} 
Thus, if in \eqref{e:f-q_rV>1_rW>0_3} we assume $S,U$ linearly independent, we obtain 
$R^Q(X,S\otimes J)J=0$\,, for any $X\in W$; equivalently, 
\begin{equation} \label{e:f-q_rV>1_rW>0_4} 
<R^Q(X,S\otimes J)J',J>=0\;, 
\end{equation}  
for any $X\in W$ and $J'\in Z$, orthogonal on $J$, where $<\cdot,\cdot>$ denotes the Riemannian structure on $Q$\,.\\ 
\indent 
Finally, if $(J,J',J'')$ is an orthonormal frame on $Q$\,, then \eqref{e:f-q_rV>1_rW>0_2} gives 
\begin{equation} \label{e:f-q_rV>1_rW>0_5} 
\begin{split} 
&\bigl(R^V(X,S\otimes J)U\bigr)\otimes J'+U\otimes\bigl(R^Q(X,S\otimes J)J'\bigr)\\ 
&+\bigl(R^V(U\otimes J',X)S\bigr)\otimes J+S\otimes\bigl(R^Q(U\otimes J',X)J\bigr)=0\;,  
\end{split} 
\end{equation}
for any $S,U\in V$ and $X\in W$. Hence, if $S,U$ are linearly independent, we deduce 
$<R^Q(X,S\otimes J)J',J''>=0$\,, for any $X\in W$. Together with \eqref{e:f-q_rV>1_rW>0_4}\,, 
this shows that $R^Q(X,S\otimes J)J'=0$\,, and the proof is complete. 
\end{proof}

\begin{prop} \label{prop:f-q_rV>2_rW=0} 
Let $(E,V)$ be an almost $f$-structure on $M$ and let $\nabla$ be the connection on $E$ induced 
by some torsion free connection on $M$, compatible with $(E,V)$\,; denote by $\r:E\to TM$ the projection.\\ 
\indent 
If $\rank E=4\rank V\geq12$ then $(E,\r,\nabla)$ is integrable. 
\end{prop}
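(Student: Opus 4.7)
Since $\rank E=4\rank V$, the factor $W$ in the decomposition $TM=(V\otimes Q)\oplus W$ vanishes and $TM=V\otimes Q$. Because $D$ is torsion free, so is $\nabla$, and Corollary~\ref{cor:f-q_D} reduces the integrability of $(E,\r,\nabla)$ to checking that, for every $J\in Z$,
\begin{equation*}
R^Q\!\bigl(\Lambda^{2\!}\bigl(\r(E^J)\bigr)\bigr)(J)\subseteq(J'+{\rm i}J'')^{\perp}\,,
\end{equation*}
where $(J,J',J'')$ is a positive orthonormal frame of $Q$\,. Using the identification $E=V\otimes\Hq$\,, a short computation gives $\r(E^J)=V^{\C}\otimes\C\{J,\,J'+{\rm i}J''\}$\,, so the inclusion above is equivalent to requiring $R^Q(X,Y)J\in\C(J'+{\rm i}J'')$ for all $X,Y\in\r(E^J)$\,.

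The plan is to derive this constraint from the first Bianchi identity for $D$\,, applied to triples of vector fields in $TM=V\otimes Q$. Writing $X_i=S_i\otimes J_i$ with $S_i\in V$ and $J_i\in Q^{\C}$, that identity reads
\begin{equation*}
\sum_{\textrm{cyc}}\bigl[\bigl(R^V(X_i,X_j)S_k\bigr)\otimes J_k+S_k\otimes\bigl(R^Q(X_i,X_j)J_k\bigr)\bigr]=0\;.
\end{equation*}
The hypothesis $\rank V\geq3$\,, equivalent to $\rank E\geq12$\,, makes it possible to choose $S_1,S_2,S_3$ linearly independent in $V$, so that projecting onto each $S_i$-component of $V^{\C}\otimes Q^{\C}$ extracts individual relations between $R^V$ and $R^Q$.

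The cleanest configuration is $J_1=J_2=J_3=J$\,: the $R^V$-contributions then land in $V\otimes\R J$ and the $R^Q$-contributions in $V\otimes J^{\perp}$, and the orthogonality of these summands forces each cyclic sum to vanish separately. Combined with the linear independence of $S_1,S_2,S_3$\,, this will yield $R^Q(S\otimes J,U\otimes J)J=0$ for every $S,U\in V$\,. The pairs $(X,Y)\in\r(E^J)\times\r(E^J)$ of the forms $(S\otimes J,U\otimes(J'+{\rm i}J''))$ and $(S\otimes(J'+{\rm i}J''),U\otimes(J'+{\rm i}J''))$ will be handled by the same mechanism, applied to triples $(S_1\otimes A_1,S_2\otimes A_2,S_3\otimes A_3)$ with each $A_i\in\{J,\,J'+{\rm i}J''\}$ and then combining several such Bianchi identities to peel off the wanted $R^Q$-components.

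The main obstacle will be these mixed cases: once the $J_i$ span more than one line in $Q^{\C}$, the $R^V$- and $R^Q$-contributions no longer lie in complementary summands of $V\otimes Q$\,, so the clean decoupling available when $J_1=J_2=J_3$ breaks down. Disentangling them requires using the three linearly independent $S_i$ to generate enough scalar relations from varying configurations of the $J_i$ to eliminate the $R^V$-dependence; it is here that the sharp hypothesis $\rank V\geq3$ enters.
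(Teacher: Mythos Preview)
Your plan is correct and follows the same strategy as the paper: reduce via Corollary~\ref{cor:f-q_D} to a curvature constraint on $R^Q$, then extract it from the first Bianchi identity for $D$ applied to tensors $S_i\otimes A_i$, using $\rank V\geq3$ to separate terms by their $V$-component. The paper carries this out with \emph{real} frames, first proving the auxiliary identities
\[
R^Q(S\otimes J,U\otimes J)J=0,\quad R^Q(S\otimes J,U\otimes J)J'=0,\quad R^Q(S\otimes J,U\otimes J')J''=0
\]
from Bianchi with $(J,J,J)$ and $(J,J,J')$, and then assembling these into the three complex relations $\langle R^Q(X,Y)J,\,J'+{\rm i}J''\rangle=0$ for $X,Y\in\r(E^J)$; the second of these needs a short extra argument that $\langle R^Q(S\otimes J,U\otimes A)J,A\rangle=c\langle A,A\rangle$ with $c$ independent of $A\in J^{\perp}$.

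Your choice to work directly with $A_i\in\{J,\,J'+{\rm i}J''\}$ is in fact slightly more efficient, and the ``main obstacle'' you anticipate does not materialise. The point you are missing is this: project the complexified Bianchi identity onto the line $\C(J'-{\rm i}J'')\subseteq Q^{\C}$. Every $R^V$-term is tensored with $J$ or $J'+{\rm i}J''$ and dies; and since any $A\in\mathfrak{so}(Q)^{\C}$ sends the null vector $J'+{\rm i}J''$ into its own orthogonal $\C\{J,\,J'+{\rm i}J''\}$, the terms $S_k\otimes R^Q(\cdot,\cdot)(J'+{\rm i}J'')$ also have no $(J'-{\rm i}J'')$-component. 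What survives is exactly a linear combination $\sum_k \langle R^Q(X_i,X_j)J,\,J'+{\rm i}J''\rangle\,S_k$ over those $k$ with $A_k=J$, and linear independence of the $S_k$ finishes each of the three cases in one line. So no ``combining several Bianchi identities'' or elimination of $R^V$ is needed in the mixed cases; the pure-$J$ case (requiring three independent $S_i$) is where the hypothesis $\rank V\geq3$ is genuinely used.
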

\begin{proof} 
We shall use the same notations as in the proof of Proposition \ref{prop:f-q_rV>1_rW>0}\,. Note that, 
$\rank E=4\rank V\geq12$ if and only if $W=0$ and $\rank V\geq3$.\\ 
\indent 
Firstly, we shall prove that, for any $S,U\in V$ and any orthonormal frame $(J,J',J'')$ on $Q$\,, the 
folowing relations hold: 
\begin{equation} \label{e:f-q_rV>2_rW=0} 
\begin{split} 
R^Q(S\otimes J,U\otimes J)J=&\,0\;,\\ 
R^Q(S\otimes J,U\otimes J)J'=&\,0\;,\\ 
R^Q(S\otimes J,U\otimes J')J''=&\,0\;. 
\end{split} 
\end{equation} 
\indent 
{}From the first Bianchi identity applied to $D$ we obtain that, for any $J,J'\in Z$ and $S,T,U\in V$, we have: 
\begin{equation} \label{e:f-q_rV>2_rW=0_1} 
\begin{split} 
\bigl(R^V&(S\otimes J,T\otimes J)U\bigr)\otimes J'+U\otimes\bigl(R^Q(S\otimes J,T\otimes J)J'\bigr)\\ 
+&\bigl(R^V(T\otimes J,U\otimes J')S\bigr)\otimes J+S\otimes\bigl(R^Q(T\otimes J,U\otimes J')J\bigr)\\ 
&+\bigl(R^V(U\otimes J',S\otimes J)T\bigr)\otimes J+T\otimes\bigl(R^Q(U\otimes J',S\otimes J)J\bigr)=0\;. 
\end{split} 
\end{equation} 
\indent 
If in \eqref{e:f-q_rV>2_rW=0_1} we take $J=J'$ and $S,T,U$ linearly independent, we obtain that 
the first relation of \eqref{e:f-q_rV>2_rW=0} holds, for any $J\in Z$ and $S,U\in V$ (note that, if $S,U$ are linearly 
dependent then the first two relations of \eqref{e:f-q_rV>2_rW=0} are trivial).\\ 
\indent 
If in \eqref{e:f-q_rV>2_rW=0_1} we take $J\perp J'$ and either $S,T,U$ linearly independent, or $S=U$ and $S,T$ linearly independent, we obtain  
\begin{equation} \label{e:f-q_rV>2_rW=0_2} 
\begin{split} 
<R^Q(S\otimes J,U\otimes J)J',J''>\,=&\,0\;,\\ 
<R^Q(S\otimes J,U\otimes J')J,J''>\,=&\,0\;,  
\end{split} 
\end{equation}
for any orthonormal frame $(J,J',J'')$ on $Q$\,, and any $S,U\in V$.\\ 
\indent 
On swapping $J$ and $J'$, in the second relation of \eqref{e:f-q_rV>2_rW=0_2}\,, we deduce 
\begin{equation} \label{e:f-q_rV>2_rW=0_3} 
<R^Q(S\otimes J,U\otimes J')J',J''>\,=\,0\;, 
\end{equation}
for any orthonormal frame $(J,J',J'')$ on $Q$\,, and any $S,U\in V$.\\ 
\indent 
Now, the second relation of \eqref{e:f-q_rV>2_rW=0_2} and \eqref{e:f-q_rV>2_rW=0_3} imply that the third relation 
of \eqref{e:f-q_rV>2_rW=0} holds, as claimed.\\ 
\indent 
Further, the first relation of \eqref{e:f-q_rV>2_rW=0_2} implies that the second relation of \eqref{e:f-q_rV>2_rW=0} holds 
if and only if $<R^Q(S\otimes J,U\otimes J)J',J>\,=0$\,; but this is a consequence of the first relation of  \eqref{e:f-q_rV>2_rW=0}\,.\\ 
\indent 
To complete the proof, we use Corollary \ref{cor:f-q_D}\,. Thus, we have to prove that for any positive orthonormal frame $(J,J',J'')$ on $Q$\,, 
and any $S,U\in V$, the following holds: 
\begin{equation} \label{e:f-q_rV>2_rW=0_4} 
\begin{split} 
<R^Q(S\otimes J,U\otimes J)J,J'+{\rm i}J''>\,=&\,0\;,\\ 
<R^Q\bigl(S\otimes J,U\otimes(J'+{\rm i}J'')\bigr)J,J'+{\rm i}J''>\,=&\,0\;,\\ 
<R^Q\bigl(S\otimes(J'+{\rm i}J''),U\otimes(J'+{\rm i}J'')\bigr)J,J'+{\rm i}J''>\,=&\,0\;. 
\end{split} 
\end{equation} 
\indent 
Obviously, the first relation of \eqref{e:f-q_rV>2_rW=0_4} is an immediate consequence of the first relation of \eqref{e:f-q_rV>2_rW=0}\,.\\ 
\indent 
Note that, the second relation of \eqref{e:f-q_rV>2_rW=0_2} implies that, for any $A\in J^{\perp}\setminus\{0\}$, we have 
\begin{equation*}  
R^Q(S\otimes J,U\otimes A)J=\,\frac{<R^Q(S\otimes J,U\otimes A)J,A>}{<A,A>}\,A\;;    
\end{equation*}  
in particular, $<R^Q(S\otimes J,U\otimes A)J,A>\,=c<A,A>$\,, for any $A\in J^{\perp}$, where $c$ does not depend of $A$\,. 
As $J'+{\rm i}J''$ is isotropic this shows that the second relation of \eqref{e:f-q_rV>2_rW=0_4} holds.\\ 
\indent 
Finally, the last two relations of \eqref{e:f-q_rV>2_rW=0} (applied to suitable orthonormal frames) 
imply $R^Q\bigl(S\otimes(J'+{\rm i}J''),U\otimes(J'+{\rm i}J'')\bigr)J=0$\,.  Hence, also, the third relation of \eqref{e:f-q_rV>2_rW=0_4} holds.\\ 
\indent  
The proof is complete. 
\end{proof} 

\indent 
We can, now, give a new proof for \cite[Theorem 4.9]{fq_2}\,, where, note that, the condition $\rank V\neq1$ was 
discarded, due to a misprint. 

\begin{cor} \label{cor:f-q_D_torsion-free_integrab} 
Let $(E,V)$ be an almost $f$-structure on $M$ and let $\nabla$ be the connection on $E$ induced 
by some torsion free connection $D$ on $M$, compatible with $(E,V)$\,.\\ 
\indent 
If either $\rank E>4\rank V\geq8$ or $\rank E=4\rank V\geq12$ then $(E,V,\nabla)$ is integrable. 
\end{cor}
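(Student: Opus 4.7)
The plan is to observe that the corollary is an immediate combination of Corollary \ref{cor:f-q_D} with the two preceding propositions, once we notice that the torsion-freeness of $D$ trivializes the first of the two conditions in \eqref{e:f-q_D}. First, I would remark that since $D$ is torsion free, its torsion $T$ vanishes identically, and so the inclusion $T\bigl(\Lambda^{2\!}\bigl(\r\bigl(E^J\bigr)\bigr)\bigr)\subseteq\r\bigl(E^J\bigr)$ is automatic for every $J\in Z$. Thus, by Corollary \ref{cor:f-q_D}, integrability of $(E,\r,\nabla)$ reduces to verifying the condition $R^Q\bigl(\Lambda^{2\!}\bigl(\r\bigl(E^J\bigr)\bigr)\bigr)(J)\subseteq(J'+\mathrm{i}J'')^{\perp}$ on the curvature of the induced connection $D^Q$ on $Q$.

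Next I would split into the two cases stated in the hypothesis. If $\rank E>4\rank V\geq8$, I would invoke Proposition \ref{prop:f-q_rV>1_rW>0}\,, which gives that $D^Q$ is flat, so $R^Q\equiv0$ and the remaining condition from Corollary \ref{cor:f-q_D} is satisfied trivially. If instead $\rank E=4\rank V\geq12$, then the required integrability is exactly the content of Proposition \ref{prop:f-q_rV>2_rW=0}\,, so there is nothing further to prove.

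Essentially no genuine obstacle remains at this stage: the substantive computations, namely the vanishing of $R^Q$ in the first case and the direct verification of the $R^Q$-condition in the second case, have already been carried out in Propositions \ref{prop:f-q_rV>1_rW>0} and \ref{prop:f-q_rV>2_rW=0}\,. The only care needed is to record clearly that the torsion hypothesis on $D$ simultaneously supplies torsion-freeness of $\nabla$ (via $T^E=\r^*T$) and therefore kills the torsion clause in \eqref{e:f-q_D}\,, which is what allows us to reduce to a pure curvature statement on $Q$ and then apply the two propositions. Accordingly, the proof amounts to a short paragraph citing these three results in the right order.
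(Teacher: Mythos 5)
Your reduction of the co-CR quaternionic half is correct and is exactly what the paper does: since $D$ is torsion free, $T=0$ makes the torsion clause of \eqref{e:f-q_D} vacuous, and then Proposition \ref{prop:f-q_rV>1_rW>0} (giving $R^Q=0$ when $\rank E>4\rank V\geq8$) together with Proposition \ref{prop:f-q_rV>2_rW=0} (which directly verifies the $R^Q$-condition when $\rank E=4\rank V\geq12$) settles, via Corollary \ref{cor:f-q_D}, the integrability of $(E,\r,\nabla)$. The gap is that this is not what the corollary asserts. The conclusion concerns $(E,V,\nabla)$, the $f$-quaternionic structure, and an almost $f$-quaternionic structure $(E,V)$ carries \emph{two} underlying structures: the almost co-CR quaternionic structure $(E,\r)$, with $\r:E\to TM$ the projection, and the almost CR quaternionic structure $(E,\iota)$, with $\iota:TM\to E$ the inclusion. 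Integrability of $(E,V,\nabla)$ --- this corollary being a new proof of \cite[Theorem 4.9]{fq_2} --- requires the integrability of both, and your proposal establishes only the co-CR half; note the deliberate shift in notation from $(E,\r,\nabla)$ in Propositions \ref{prop:f-q_rV>1_rW>0} and \ref{prop:f-q_rV>2_rW=0} to $(E,V,\nabla)$ in the corollary.

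Concretely, Corollary \ref{cor:f-q_D} (and Theorem \ref{thm:co-cr_q_integrab} behind it) characterises only the integrability of the co-CR quaternionic structure, so no amount of care with the torsion clause can make it yield the CR half. For that, the paper uses a separate argument: the flatness of the connection induced on $Q$ supplied by Proposition \ref{prop:f-q_rV>1_rW>0}, combined with the external integrability criterion \cite[Proposition 4.5]{fq} for CR quaternionic structures. You would need to add this second step (and check it covers both dimensional regimes in the hypothesis) before the proof is complete.
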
 
\begin{proof} 
The integrability of the underlying almost co-CR quaternionic structure is a consequence of 
Corollary \ref{cor:f-q_D}\,, and Propositions \ref{prop:f-q_rV>1_rW>0} and \ref{prop:f-q_rV>2_rW=0}\,.\\ 
\indent 
The integrability of the undelying almost CR quaternionic structure is a consequence of 
Proposition \ref{prop:f-q_rV>1_rW>0} and \cite[Proposition 4.5]{fq}\,. 
\end{proof} 

\begin{cor} \label{for:connections_flat_on_Q_and_V} 
Let $(E,V)$ be an almost $f$-structure on $M$ and let $\nabla$ be the connection on $E$ induced 
by some torsion free connection on $M$, compatible with $(E,V)$\,.\\ 
\indent 
If the connection induced on $Q$ is flat then, also, the connection induced on $V$ is flat.\\ 
\indent 
The converse also holds if $\rank V\geq3$\,.
\end{cor}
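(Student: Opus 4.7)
The plan is to apply the first Bianchi identity for the torsion-free connection $D$ to triples of vectors in the decomposition $TM=(V\otimes Q)\oplus W$, exploiting that $R^M$ preserves this splitting. Since $D=(D^V\otimes D^Q)\oplus D^W$, for any $X,Y\in TM$, $U\in V$ and $J\in Q$ we have
\begin{equation*}
R^M(X,Y)(U\otimes J)=(R^V(X,Y)U)\otimes J+U\otimes(R^Q(X,Y)J).
\end{equation*}
Thus the projection onto $V\otimes Q$ of the Bianchi identity applied to three vectors in $(V\otimes Q)\oplus W$ takes the form of an equation in $V\otimes Q$ whose summands are elementary tensors; decoupling their $V$- and $Q$-factors under suitable linear independence hypotheses is the basic tactic.

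For the direct implication (assuming $R^Q=0$) I would treat three configurations of the Bianchi triple in turn. Taking $X,Y\in W$ and $Z=U\otimes J$, two of the three Bianchi terms lie in $W$, and the projection to $V\otimes Q$ reduces to $(R^V(X,Y)U)\otimes J=0$, giving $R^V|_{W\wedge W}=0$. With $X\in W$, $Y=S\otimes J$ and $Z=U\otimes J'$, the single $W$-term drops out and one is left with $(R^V(X,Y)U)\otimes J'+(R^V(Z,X)S)\otimes J=0$; choosing $J,J'\in Q$ linearly independent, which is possible since $\rank Q=3$, kills each summand and yields the vanishing of $R^V$ on $W\wedge(V\otimes Q)$. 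Finally, when all three arguments lie in $V\otimes Q$ with $Q$-components forming a basis of $Q$, the resulting identity forces each $V$-factor to vanish separately, so $R^V(S_1\otimes J_1,S_2\otimes J_2)=0$ whenever $J_1,J_2$ are linearly independent; the residual case $J_1=J_2$ is settled by a second Bianchi with an auxiliary $J'$ linearly independent of $J_1$, two of whose terms are already known to vanish from the previous step.

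The converse, under $\rank V\geq 3$, proceeds by the same scheme with the roles of $V$ and $Q$ interchanged. Each projected Bianchi identity now reads as a sum of elementary tensors $S_i\otimes R^Q(\cdots)J_j$, and the vanishing of $R^Q$ is extracted by choosing the $V$-factors $S_i$ linearly independent. Specifically, the configurations with $X,Y\in W$ and with $X\in W$, $Y,Z\in V\otimes Q$ only require two linearly independent sections of $V$ to conclude, while the last case, with all three arguments in $V\otimes Q$, is exactly where $\rank V\geq 3$ enters: extracting $R^Q(S_1\otimes J_1,S_2\otimes J_2)=0$ for generic $S_1,S_2$ requires a third $S_3\in V$ linearly independent of both, and the degenerate case $S_1,S_2$ linearly dependent is reduced to the generic one by an auxiliary Bianchi with $S_1=S_2$ and an independent $S_3$, exactly as in the forward direction.

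I do not expect any individual step to be difficult; the main obstacle is the careful bookkeeping to exhaust every placement of the Bianchi triple within $(V\otimes Q)\oplus W$ and to reduce each degenerate configuration in the $V$- or $Q$-factors to a generic one by a secondary Bianchi application. The hypothesis $\rank V\geq 3$ is then seen to be sharp: it encodes precisely the existence of three linearly independent vectors in $V$ which the last case of the converse demands.
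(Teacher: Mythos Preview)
Your proposal is correct and follows essentially the same approach as the paper: both apply the first Bianchi identity for $D$ to triples chosen from $(V\otimes Q)\oplus W$, project onto $V\otimes Q$ using $R^M(X,Y)(U\otimes J)=(R^V(X,Y)U)\otimes J+U\otimes(R^Q(X,Y)J)$, and decouple via linear independence in the $Q$- (respectively $V$-) factor. The only noteworthy difference is in the treatment of the degenerate cases: the paper disposes of the linearly dependent $Q$-factor case by invoking continuity of $(A,B)\mapsto R^V(S\otimes A,T\otimes B)U$, whereas you reduce it to the generic case via an auxiliary Bianchi triple---both are valid and equally short.
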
 
\begin{proof} 
As in the proof of Proposition \ref{prop:f-q_rV>1_rW>0} we deduce that \eqref{e:f-q_rV>1_rW>0_1}  
and \eqref{e:f-q_rV>1_rW>0_5} hold. Hence, if $R^Q=0$ then $R^V(X,Y)U=0$ and $R^V(S\otimes J,X)U=0$ for 
any $S,U\in V$\,, $X,Y\in W$ and $J\in Z$\,.\\ 
\indent 
{}From the first Bianchi identity applied to $D$ we obtain that, for any $J,J',J''\in Z$ and $S,T,U\in V$, we have: 
\begin{equation} \label{e:connections_flat_on_Q_and_V} 
\begin{split} 
\bigl(R^V&(S\otimes J,T\otimes J')U\bigr)\otimes J''+U\otimes\bigl(R^Q(S\otimes J,T\otimes J')J''\bigr)\\ 
+&\bigl(R^V(T\otimes J',U\otimes J'')S\bigr)\otimes J+S\otimes\bigl(R^Q(T\otimes J',U\otimes J'')J\bigr)\\ 
&+\bigl(R^V(U\otimes J'',S\otimes J)T\bigr)\otimes J'+T\otimes\bigl(R^Q(U\otimes J'',S\otimes J)J'\bigr)=0\;. 
\end{split} 
\end{equation} 
\indent 
If $R^Q=0$ and $J,J',J''$ are linearly independent then, from \eqref{e:connections_flat_on_Q_and_V} we obtain that 
$R^V(S\otimes A,T\otimes B)U=0$ for any $S,T,U\in V$ and $A,B\in Q$ (here, we have used the continuity of the map 
$(A,B)\mapsto R^V(S\otimes A,T\otimes B)U$, to allow $A,B$ linearly dependent).\\ 
\indent 
Similarly, if $\rank V\geq3$ and $R^V=0$\,, from Proposition \ref{prop:f-q_rV>1_rW>0} and \eqref{e:connections_flat_on_Q_and_V} 
we obtain $R^Q=0$\,. 
\end{proof} 

\indent 
We end with the following result. 

\begin{thm} \label{thm:f-q_generic_dims} 
Let $(E,V)$ be an almost $f$-structure on $M$ and let $\nabla$ be the connection on $E$ induced 
by some torsion free connection on $M$, compatible with $(E,V)$\,.\\ 
\indent 
If $\rank E>4\rank V\geq8$ then, locally, $(M,E,V,\nabla)$ is the product of 
$({\rm Im}\Hq\!)^{\rank V}$ with a hypercomplex manifold. 
\end{thm}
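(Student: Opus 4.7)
The plan is to combine the integrability and flatness results already established in this section with a de~Rham--type decomposition argument based on the parallel trivializations they produce. Under the hypothesis $\rank E>4\rank V\geq8$, Corollary~\ref{cor:f-q_D_torsion-free_integrab} gives the integrability of the induced co-CR quaternionic structure $(E,\r,\nabla)$, Proposition~\ref{prop:f-q_rV>1_rW>0} gives that the connection $D^Q$ induced on $Q$ is flat, and Corollary~\ref{for:connections_flat_on_Q_and_V} then gives that $D^V$ is flat as well. On a simply connected neighborhood I would therefore fix a $\nabla$-parallel local frame $e_1,\dots,e_{\rank V}$ of $V$ together with a $\nabla$-parallel oriented orthonormal frame $(J_1,J_2,J_3)$ of $Q$.

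The tensor products $e_i\otimes J_\alpha$ then form $3\rank V$ commuting, $D$-parallel vector fields spanning $V\otimes Q$, and their flows generate a locally free $\mathbb{R}^{3\rank V}$-action whose orbits are the leaves of $V\otimes Q$. Because $W$ is $D$-parallel and each $e_i\otimes J_\alpha$ is $D$-parallel, torsion-freeness yields $[e_i\otimes J_\alpha,Y]=D_{e_i\otimes J_\alpha}Y\in W$ for every section $Y$ of $W$, so this action also preserves the $W$-foliation. Choosing any leaf $N$ of $W$ as a transversal, the action map $\mathbb{R}^{3\rank V}\times N\to M$ is a local diffeomorphism.

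To upgrade this to a splitting of $f$-quaternionic manifolds with compatible connections I would spread any chosen frame of $W|_N$ by the $\mathbb{R}^{3\rank V}$-action to obtain a frame $(\tilde f_a)$ of $W$ with $[e_i\otimes J_\alpha,\tilde f_a]=0$; torsion-freeness and the parallelism of $e_i\otimes J_\alpha$ then force $D_{e_i\otimes J_\alpha}\tilde f_a=0$. Together with the decoupled form $D=(D^V\otimes D^Q)\oplus D^W$ recorded at the start of this section, this shows that in the frame $(e_i\otimes J_\alpha,\tilde f_a)$ the connection $\nabla$ becomes the external direct sum of the trivial flat connection on the first factor and of $D^W|_N$ on the second, and that $E$ decomposes accordingly as $V\oplus(V\otimes Q)$ over the first factor and as $W|_N$ over the second. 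The parallel trivializations then identify the first factor with the standard flat $f$-quaternionic manifold $({\rm Im}\Hq\!)^{\rank V}$, while on $N$ the three parallel anticommuting almost complex structures $J_1,J_2,J_3$ preserved by the torsion-free connection $D^W|_N$ yield a hypercomplex structure via the converse direction of Obata's theorem.

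The main obstacle is passing from the mere diffeomorphism $M\cong\mathbb{R}^{3\rank V}\times N$, which follows readily from the commuting frame $(e_i\otimes J_\alpha)$, to a genuine product of $f$-quaternionic manifolds with compatible connections. The crucial ingredient is the $\mathbb{R}^{3\rank V}$-invariant complementary frame $(\tilde f_a)$ of $W$, whose existence relies not merely on flatness of $V$ and $Q$ as vector bundles but on the stronger fact that $e_i$ and $J_\alpha$, and hence $e_i\otimes J_\alpha$, are globally $\nabla$-parallel on the chosen neighborhood; without this parallelism one would obtain a product of manifolds but not of connections.
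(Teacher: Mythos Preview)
Your argument is correct but follows a genuinely different route from the paper's. The paper lifts to the total space of the bundle $V$, shows (as in Proposition~\ref{prop:co-cr_q_with_first_Bianchi}) that this total space is hypercomplex with Obata connection $\p^*\nabla$, and then produces the two factors by constructing twistorial submersions $\phi:M\to({\rm Im}\Hq\!)^{\rank V}$ and $\psi:M\to N$ from foliations on that auxiliary space, invoking \cite{IMOP} for the twistoriality. You instead stay on $M$ and use the parallel frames of $V$ and $Q$ supplied by Proposition~\ref{prop:f-q_rV>1_rW>0} and Corollary~\ref{for:connections_flat_on_Q_and_V} to run a de~Rham--type splitting directly; this is more elementary and avoids both the passage to the total space of $V$ and the external reference. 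One step you pass over deserves an extra sentence: to conclude that $\nabla$, and not only the almost $f$-quaternionic structure, is a product you need $D^W_{\tilde f_a}\tilde f_b$ to be invariant under your $\mathbb{R}^{3\rank V}$-action, and this does not follow from $D_{e_i\otimes J_\alpha}\tilde f_a=0$ alone. It does follow, though, because on every leaf of $W$ the restriction of $D$ is torsion free with $DJ_\alpha=0$, hence equals the Obata connection of that leaf; since your action carries the hypercomplex structure of one leaf to that of another, it also carries Obata connection to Obata connection, whence $D^W$ is invariant.
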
 
\begin{proof} 
By Proposition \ref{prop:f-q_rV>1_rW>0} and Corollary \ref{for:connections_flat_on_Q_and_V} the connections induced 
on $V$ and $Q$ are flat. Furthermore, as in the proof of Proposition \ref{prop:co-cr_q_with_first_Bianchi}  
(note that, here, we do not need \cite[Theorem 2.2]{KMa-BLMS95}\,) we obtain 
that, locally, $V$ is a hypercomplex manifold such that the projection onto $M$ is twistorial.\\ 
\indent  
Moreover, we have that $\p^*\nabla$ restricts to give a flat connection on the quaternionic distribution $K$ on $V$ 
generated by ${\rm ker}\dif\!\p$\,; indeed, we have $K=\p^*\bigl(V\oplus(V\otimes Q)\bigr)$\,. 
Therefore $K$ is integrable and, as $\p^*\nabla$ is, also, torsion free, its leaves are, 
locally, quaternionic vector spaces, whose linear quaternionic structures are preserved by the parallel transport of $\p^*\nabla$. 
Thus, if  $U$ is a covariantly constant section of $K$ and $X$ is a section of $\p^*W$ then $[U,X]=(\p^*\nabla)_UX$ is a section 
of $\p^*W$. Hence, the linear quaternionic structures on the leaves of $K$ are (locally) projectable with respect to $\p^*W$. 
Therefore, locally, there exists a quaternionic submersion $\phi$ from $V$ onto $\Hq^{\!\rank V}$ which, by \cite{IMOP}\,, is twistorial. 
Thus, $\phi$ restricted to $M$, identified with the zero section of $V$, is a twistorial submersion onto $({\rm Im}\Hq\!)^{\rank V}$ whose fibres 
are the leaves of $W$.\\ 
\indent 
Now, as $Q$ is flat, $V$ is locally a hypercomplex manifold and $\p^*\nabla$ is its Obata connection. Let $J$ be any covariantly constant 
admissible complex structure on $V$. Thus, $T^JV={\rm ker}(J+{\rm i})$ is preserved by $\p^*\nabla$. Hence, if $X$ is a section of $T^JV$ 
and $U$ is section of $K$ we have that $[U,X]=(\p^*\nabla)_UX-(\p^*\nabla)_XU$ is a section of $T^J+K$. Therefore $T^JV$ is projectable 
with respect to $K$. This shows that, locally there exists a triholomorphic submersion from $V$ onto a hypercomplex manifold $N$, with $\dim N=\rank W$, 
which factorises into $\p$ followed by a twistorial submersion $\psi$ from $M$ to $N$; also, the latter is triholomorphic when restricted to the leaves of $W$.\\ 
\indent 
Finally, the map $M\to({\rm Im}\Hq\!)^{\rank V}\times N$, $x\mapsto\bigl(\phi(x),\psi(x)\bigr)$ provides the claimed (twistorial) identification.    
\end{proof}

\end{document}